\newenvironment{acknowledgements}{\par\textsc{Acknowledgements.}}{}
\let\tilde\widetilde
\newcommand{\Tan}{{\mathrm{T}}}
\newcommand{\loccit}{\emph{loc.\ cit.}}
\newcommand{\bX}{{X}}
\newcommand{\Z}{{\mathbb{Z}}}
\newcommand{\Q}{\mathbb{Q}}
\newcommand{\conn}{^\circ}
\newcommand{\sep}{^{\mathrm{sep}}}
\newcommand{\Hsc}{{H^{*\,\circ}}}
\newcommand{\Htsc}{\tilde{H}^{*\,\circ}}
\newcommand{\norm}[1][]{\NN_{#1}}
\newcommand{\dnorm}[1][]{{\widehat\NN_{#1}}}
\newcommand{\dnormst}[1][]{{\widehat\NN_{#1}^{\text{st}}}}
\newcommand{\normchar}[1][]{\NN^*_{#1}}
\newcommand{\dnormchar}[1][]{\widehat\NN^*_{#1}}
\newcommand{\normcochar}[1][]{\NN_{#1 *}}
\newcommand{\dnormcochar}[1][]{\widehat\NN_{#1 *}}
\DeclareMathOperator{\stab}{stab}
\DeclareMathOperator{\chr}{char}
\DeclareMathOperator{\diag}{diag}
\DeclareMathOperator{\Int}{Int}
\DeclareMathOperator{\Aut}{Aut}
\DeclareMathOperator{\Inn}{Inn}
\DeclareMathOperator{\GL}{GL}
\DeclareMathOperator{\Gal}{Gal}
\newcommand{\NN}{\mathcal{N}}
\newcommand{\TTst}{\mathcal{T}_{\mathrm{st}}}
\newcommand{\inv}{^{-1}}
\newcommand{\lsup}[1]{{}^{#1}}
\newcommand{\maaap}[4]{\ensuremath{{#2}\colon{#3}#1{#4}}}
\newcommand{\map}{\maaap\longrightarrow}
\newcommand{\mapto}{\maaap\longmapsto}   
\newcommand{\bimap}{\maaap{\stackrel{\sim}{\longrightarrow}}}
\newcommand{\abmap}[2]{\ensuremath{{#1}\longrightarrow{#2}}}
\newenvironment{condition}[1]{\vspace{.5\baselineskip}
\begin{center}
\begin{minipage}{.85\textwidth}
\par \textbf{#1.}}{\end{minipage}\end{center}\vspace{.5\baselineskip}}
\newtheorem{thm}{Theorem}[section] 
\newtheorem{lem}[thm]{Lemma}
\newtheorem{prop}[thm]{Proposition} 
\newtheorem{cor}[thm]{Corollary} 
\theoremstyle{definition} 
\newtheorem{defn}[thm]{Definition} 
\theoremstyle{remark} 
\newtheorem{example}[thm]{Example} 
\newtheorem{examples}[thm]{Examples} 
\newtheorem{rem}[thm]{Remark} 
\newtheorem{rems}[thm]{Remarks} 
\numberwithin{equation}{section}
\title[Lifting]%
{Lifting representations of finite reductive groups I: Semisimple conjugacy classes}
\date{\today}
\author{Jeffrey D.~Adler}
\email[Adler]{jadler@american.edu}
\author{Joshua M.~Lansky} 
\email[Lansky]{lansky@american.edu}
\address[Adler,Lansky]{Department of Mathematics and Statistics\\
American University\\
Washington, DC 20016-8050}
\subjclass[2010]{Primary 20G15, 20G40. Secondary 22E50, 20C33, 22E35.}
\keywords{reductive group, lifting, conjugacy class, representation, Lusztig series}
\thanks{%
Both authors were partially supported by 
the National Science Foundation (DMS-0854844),
the National Security Agency (H98230-07-1-002 and H98230-08-1-0068),
and 
Summer Faculty Research Awards
from the College of Arts and Sciences of American University.
}
\begin{document}

\begin{abstract}
Suppose that $\tilde{G}$ is a connected reductive group
defined over a field $k$,
and
$\Gamma$ is a finite group acting via $k$-automorphisms
of $\tilde{G}$ satisfying a certain quasi-semisimplicity condition.
Then the identity component of the group of $\Gamma$-fixed points in $\tilde{G}$
is reductive.
We axiomatize the main features of the relationship between this
fixed-point group and the pair $(\tilde{G},\Gamma)$,
and consider any group $G$ satisfying the axioms.
If both $\tilde{G}$ and $G$ are $k$-quasisplit, then we
can consider their duals $\tilde{G}^*$ and $G^*$.
We show the existence of and give an explicit formula for a natural map
from the set of semisimple stable conjugacy classes in $G^*(k)$
to the analogous set for $\tilde{G}^*(k)$.
If $k$ is finite, then our groups are automatically quasisplit,
and our result specializes to give a map
of semisimple conjugacy classes.
Since such classes parametrize packets of irreducible representations
of $G(k)$ and $\tilde{G}(k)$, one obtains a mapping of such packets.
\end{abstract}

\maketitle

\addtocounter{section}{-1}
\section{Introduction}
\subsection*{Motivation}
Suppose that $F$ is a $p$-adic field with residue field $k$;
$E/F$ is a finite, tamely ramified Galois extension;
$H$ is a connected, reductive $F$-group;
and $\tilde{H} = R_{E/F}H$
is formed from $H$ via restriction of scalars.
Then one expects to have
a \emph{base change lifting} that takes $L$-packets of smooth, irreducible representations
of $H(F)$ to $L$-packets for $H(E) = \tilde{H}(F)$.
We would like to gain an explicit understanding, in terms of compact-open data,
of base change for depth-zero representations,
and this problem requires us to construct a new lifting from (packets of) representations
of $G(k)$ to those of $\tilde{G}(k)$,
for various connected reductive $k$-groups $G$ and $\tilde{G}$ attached to parahoric subgroups
of $H(F)$ and $\tilde H(F)$, respectively.
Here $\Gamma=\Gal(E/F)$ acts on $\tilde{G}$, and the identity component of its group of fixed points is $G$.
(In most cases, this new lifting cannot itself be base change.
For more details, see~\cites{adler-lansky:bc-u3-ram,adler-lansky:group-actions}.)
Since representations of $G(k)$ can be parametrized by data associated
to the dual group $G^*$, it is enough to construct an appropriate lifting of such data
from $G^*$ to $\tilde{G}^*$.

\subsection*{This paper}
In the course of creating a candidate for this new lifting, we realized that we could work in greater
generality without much difficulty.
Namely, let $k$ denote an arbitrary field,
$\tilde{G}$ and $G$ connected reductive $k$-groups, and $\Gamma$ a finite group.
Instead of assuming that $G = (\tilde{G}^\Gamma)\conn$,
the identity component of the group of fixed points of $\Gamma$ in $\tilde{G}$,
we make the more general assumption that $G$ is a \emph{parascopic group} for $(\tilde{G},\Gamma)$
(see Definition \ref{defn:parascopic-group}).
There are several reasons to do this.
First, it clarifies our proofs.
Second, while our original motivation was to improve our explicit understanding
of base change for representations of $p$-adic groups,
we hope that our more general formulation will be applicable to the understanding
of a wider collection of correspondences of representations, including endoscopic transfer.
We will take this up elsewhere.


Under our hypotheses, we have the following results.
\begin{enumerate}[(A)]
\item
\label{stmt:geom-conj-lift}
Suppose $\tilde{G}$ and $G$ are $k$-quasisplit.
Then we obtain a natural map $\dnorm$
from the $k$-variety of semisimple geometric conjugacy classes of the dual $G^*$
to the analogous variety for $\tilde{G}^*$
(Proposition \ref{prop:lift-geometric-general}).
\item
\label{stmt:stable-lift}
By restricting and refining $\dnorm$, one obtains a map $\dnormst$
from the set of semisimple stable conjugacy classes
(in the sense of Kottwitz \cite{kottwitz:rational-conj})
in $G^*(k)$ to that in $\tilde{G}^*(k)$
(Theorem \ref{thm:main}).
\item
\label{stmt:rational-lift}
If $k$ is perfect of cohomological dimension $\leq 1$
(e.g., $k$ is finite),
then $\dnorm$ is a map from the set of semisimple conjugacy classes
in $G^*(k)$ to that in $\tilde{G}^*(k)$
(Corollary \ref{cor:k-finite}).
\end{enumerate}

Before proving the above results, we show that the situation that
motivated our notion of parascopy
really is a special case of it: that in which
$\Gamma$ acts on $\tilde{G}$ via $k$-automorphisms that all preserve
a common Borel subgroup of $\tilde{G}$ and a common maximal torus in the Borel subgroup,
and $G = (\tilde{G}^\Gamma)\conn$.
This ``Borel-torus pair'' need not be defined over $k$.
Under the above hypotheses, we prove a strong form of the following:
\begin{enumerate}[(A)]
\addtocounter{enumi}{3}
\item
\label{stmt:G-reductive}
$G$ is a reductive $k$-group
(Proposition \ref{prop:arb-gp-action-well-posed}).
\end{enumerate}

Although we assume that $\tilde{G}$ and $G$ are $k$-quasisplit
in Statements \eqref{stmt:geom-conj-lift} and \eqref{stmt:stable-lift}
(it is automatic in Statement \eqref{stmt:rational-lift}),
weaker hypotheses would suffice.
See Remark \ref{rem:nonquasisplit}.

\subsection*{Outline of this paper}
After establishing some basic notation (\S\ref{sec:setup}),
we consider in \S\ref{sec:modules}
how the action of a finite group on a torus $\tilde{T}$
gives rise to a norm map $\map\NN{\tilde{T}}{T}$
(where $T$ is the identity component of the group of fixed points of $\tilde{T}$),
and also corresponding maps on the modules
of characters and cocharacters of these tori.
In fact, we deal with the more general situation where
we may replace $T$ by any isogenous image of it
(see Condition P1  
of Definition \ref{defn:parascopy}).
We then prove a strong version of Statement \eqref{stmt:G-reductive} above (\S\ref{sec:group-actions}).
Suppose $G$ and $\tilde{G}$ are connected reductive $k$-groups, and $\Gamma$ is a finite group.
In \S\ref{sec:parascopy-defn},
we say what we mean by a \emph{parascopic datum} for the triple $(\tilde{G},\Gamma,G)$,
and say that $G$ is \emph{weakly parascopic}
for the pair $(\tilde{G},\Gamma)$
if such a datum exists.
Given such a datum, we have associated maximal tori $T\subset G$ and $\tilde{T}\subset\tilde{G}$,
an action of $\Gamma$ on the Weyl group $W(\tilde{G},\tilde{T})$,
and a canonical embedding $W(G,T) \longrightarrow W(\tilde{G},\tilde{T})^\Gamma$,
which we describe explicitly in \S\ref{sec:weyl-embedding}.
Using standard cohomological arguments, we classify in \S\ref{sec:stability}
the set of stable conjugacy classes of maximal $k$-tori in a reductive $k$-group.
In \S\ref{sec:parascopy-properties},
we call a weakly parascopic group $G$ \emph{parascopic}
if a compatibility condition between the maximal
$k$-tori of $G$ and $\tilde G$ is satisfied,
a condition that is automatic in many important cases (see
Examples~\ref{ex:parascopic-group}).
We then define and prove some basic results on \emph{equivalence} of parascopic data.
For example, if $(\phi,j_*)$ is a datum for $(\tilde{G},\Gamma, G)$
with respect to the maximal $k$-tori $\tilde{T}\subseteq \tilde{G}$ and $T\subseteq G$,
and $G$ is parascopic with respect to this datum,
then given any maximal $k$-torus $T'\subseteq G$,
there is an equivalent datum associated to $T'$.
This is crucial in showing that
all of our constructions are independent of the choice of a maximal $k$-torus in $G$.
If $G$ is quasisplit over $k$, then we can form its dual group $G^*$, and
we are then in a position to prove strong duality results (\S\ref{sec:duality})
between maximal $k$-tori in $G$ and in $G^*$.
In particular,
up to stable conjugacy, we have a canonical one-to-one correspondence of tori, 
and this correspondence preserves Weyl groups and much more.
If $\tilde{G}$ is also quasisplit over $k$,
then we can use this correspondence and our norm map $\map\NN{\tilde T}{T}$ above
to define a \emph{conorm map}
$\map{\dnorm[T^*]}{T^*}{\tilde T^*}$
for dual maximal $k$-tori $T^* \subseteq G^*$ and $\tilde T^* \subseteq\tilde G^*$,
and can obtain explicit embeddings of Weyl groups
$W(G^*,T^*) \longrightarrow W(\tilde{G}^*,\tilde{T}^*)$
(\S\ref{sec:conorm}).
In particular, we show (Proposition \ref{prop:centralizer-weyl})
that such embeddings have good restriction properties
with respect to centralizer subgroups of $G^*$.
We then have all of the ingredients in place to prove Statement \eqref{stmt:geom-conj-lift}
in \S\ref{sec:geometric-general}.
Using our cohomological results in \S\ref{sec:stability},
we can then prove Statement \eqref{stmt:stable-lift}
in \S\ref{sec:main},
and it is a simple matter to observe that Statement \eqref{stmt:rational-lift}
is just a special case.

In a future work,
we will address the problem of lifting other pieces of the parametrization
of irreducible representations of finite reductive groups,
such as unipotent conjugacy classes in dual groups.

\begin{acknowledgements}
We have benefited from conversations with
Jeffrey Adams, Brian Conrad, Stephen DeBacker, Jeffrey Hakim, Robert Kottwitz,
and Jiu-Kang Yu.
Thanks also to Bas Edixhoven for pointing out an error in an earlier version
of \S\ref{sec:group-actions},
to Brian Conrad for advice on fixing it,
to Avner Ash for coining the term ``parascopy'',
and to an anonymous referee for suggesting several improvements.
\end{acknowledgements}

\section{General notation and terminology}
\label{sec:setup}
Let $k$ denote a field,
and $k\sep$ denote the separable closure of $k$ in an algebraic closure $\bar k$ of $k$.
We will abbreviate $\Gal (k\sep/k)$ by $\Gal (k)$.
Given a connected reductive $k$-group $G$ and a maximal torus 
$T$ of $G$, let $\Phi(G,T)$ (resp.~$\Phi^\vee(G,T)$) denote the absolute root
(resp.~coroot) system of $G$ with
respect to $T$.
Let $W(G,T)$ denote the Weyl group of $G$ with respect to $T$.

For $g\in G(k\sep)$, let $\Int$ denote the natural homomorphism $\abmap{G}{{\Inn}(G)}$
given by $\Int (g)(x) =  gxg\inv$.
If $x\in G(k\sep)$ (resp.~$Y\subseteq G$), we will also denote $\Int(g)(x)$ 
(resp.~$\Int(g)(Y)$) by $\lsup g x$ (resp.~$\lsup g Y$).
For an algebraic $k$-group $G$,
let $G\conn$ denote its identity component.
A \emph{geometric conjugacy class} of $G$ is an orbit for the action of $G$ on itself via
conjugation.
Following Kottwitz~\cite{kottwitz:rational-conj},
we say that two elements $s_1,s_2 \in G(k)$
are \emph{stably conjugate} if there is some $g \in G(k\sep)$
such that $\lsup g s_1 = s_2$,
and for all $\sigma \in \Gal(k)$,
we have that $g\inv \sigma(g) \in C_G(s_1)\conn(k\sep)$.
Moreover, we say that two maximal $k$-tori $T_1,T_2\subseteq G$
are \emph{stably conjugate} in $G$
if there is some element $g\in G(k\sep)$
such that
$\Int(g)$ restricts to a $k$-isomorphism from $T_1$ to $T_2$.
Let $\TTst(G,k)$ denote the set of stable conjugacy classes of maximal $k$-tori in $G$.

If $\phi$ is a homomorphism from a group $\Gamma$ to the group of automorphisms of some object,
then
we will denote the operation of taking $\phi(\Gamma)$-fixed points by
$(\phantom{x})^{\phi(\Gamma)}$,
or just by
$(\phantom{x})^\Gamma$
when $\phi$ is understood.

For any $k\sep$-torus $T$,
let $\bX^*(T)$ and $\bX_*(T)$ respectively denote the
character and cocharacter modules of $T$.
Let $\langle\phantom{x},\phantom{x}\rangle$
denote the natural bilinear pairing between $\bX^*(T)$ and $\bX_*(T)$.
Let $V^*(T) = \bX^*(T)\otimes\Q$
and $V_*(T) = \bX_*(T)\otimes\Q$.
Then $\langle\phantom{x},\phantom{x}\rangle$ extends to a nondegenerate
pairing between the
$\Q$-vector spaces $V^*(T)$ and $V_*(T)$. 
Any homomorphism $\map f{T}{T'}$ of
tori determines  maps $\map {f^*}{\bX^*(T')}{\bX^*(T)}$
and $\map {f_*}{\bX_*(T)}{\bX_*(T')}$, and hence maps
$V^*(T')\longrightarrow V^*(T)$ and $V_*(T)\longrightarrow V_*(T')$
that we will also denote by $f^*$ and $f_*$, respectively.

For $i=1,2$, let $T_i$ be a maximal $k$-torus of $G$, and suppose that
$\lsup g T_1 = T_2$ for some $g\in G(k\sep)$.  
Then $\Int (g)$ gives an isomorphism
$T_1\rightarrow T_2$ (not necessarily defined over $k$).
For $\chi\in\bX^*(T_1)$ and $\lambda\in\bX_*(T_1)$,
define 
$$
\lsup g\chi :=\Int (g)^{*\, -1}\chi,\qquad \lsup g\lambda :=\Int (g)_*\lambda.
$$

For a root datum $(\bX^*,\Phi,\bX_*,\Phi^\vee)$ and a root $\alpha\in\Phi$, we denote by $\alpha^\vee$
the corresponding coroot in $\Phi^\vee$.

\section{Finite-group actions on character and cocharacter modules}
\label{sec:modules}
Let $\tilde \bX_*$ be a lattice of finite rank equipped with an
action of a finite group $\Gamma$. Then $\Gamma$ also acts on
the dual $\tilde \bX^*$ of $\tilde\bX_*$, as well as on
$\tilde V_* = \tilde\bX_*\otimes\Q$ and its dual $\tilde V^* = \tilde\bX^*\otimes\Q$.

Let $\bX_*$ be another lattice with dual $\bX^*$,
and let $V_* = \bX_*\otimes \Q$
and $V^* = \bX^*\otimes \Q$.
We will denote by $\langle\phantom{x},\phantom{x}\rangle$
the natural bilinear pairings between $V^*$ and $V_*$, and between $\tilde V^*$ and $\tilde V_*$.
Suppose that $\map{j_*}{V_*}{\tilde V_*^\Gamma}$ is an isomorphism
such that $j_*(\bX_*) \supseteq \tilde{\bX}_*^\Gamma$.
Composing $j_*$ with the natural inclusion $\abmap{\tilde V_*^\Gamma}{\tilde V_*}$,
and taking duals,
we obtain maps $i_*$, $j^*$, and $i^*$ as follows:
$$
\begin{xy}
\xymatrix{
V_*\ar@/^1.5pc/[rr]^{i_*} 
\ar@{->}[r]^{j_*}_{\sim}
& \tilde V_*^\Gamma
\ar@{^{ (}->}[r]
& \tilde V_*  
\ar@{.>>}@/^1.5pc/[ll]^{\pi}
}
\end{xy}
\qquad
\begin{xy}
\xymatrix{
V^*
\ar@{^{ (}.>}@/_1.5pc/[rr]_{\iota}
& \tilde V^*_\Gamma
\ar@{->}[l]_{j^*}^{\sim}
& \tilde V^*
\ar@{->>}[l]
\ar@/_1.5pc/[ll]_{i^*}
}
\end{xy}
$$
where $\iota$ and $\pi$ are to be described below.  Here $\tilde V^*_\Gamma$ denotes the space of
coinvariants for the action of $\Gamma$ on $\tilde V^*$.

Define $\map{\pi}{\tilde V_*}{V_*}$ to be
$$
j_*\inv \circ \biggl(\frac{1}{|\Gamma|}\sum_{\gamma\in\Gamma}\gamma\biggr).
$$
We have that 
$\pi \circ i_* = \mathrm{id}$,
so $\pi$ is a projection of $\tilde V_*$ onto $V_*$.

Let $\map{\iota}{V^*}{\tilde V^*}$ be the transpose of $\pi$.
More explicitly, one can show that if $\tilde v\in \tilde V^*$ is any
preimage under $i^*$ of $v$, then
\begin{equation}
\label{eq:iota}
\iota(v) = \frac{1}{|\Gamma|}\sum_{\gamma\in\Gamma}\gamma\cdot \tilde v.
\end{equation}

The image of $\iota$ is clearly the subspace $\tilde V^*{}^\Gamma$ of
$\Gamma$-fixed vectors in $\tilde V^*$.
Moreover, $\iota$
is injective (since $\pi$ is surjective)
and respects the bilinear pairings $\abmap{V^*\times V_*}{\Q}$ and
$\abmap{\tilde V^*\times \tilde V_*}{\Q}$
in the sense that for all $v\in V^*$ and $w\in V_*$, we have
$\langle \iota(v),i_*(w)\rangle = \langle v,w\rangle$.
Using $\iota$ (resp.~$j_*$), we may therefore identify $V^*$ (resp.~$V_*$) with
$\tilde V^*{}^\Gamma$ (resp.~$\tilde V_*^\Gamma$).
We note that 
\begin{equation}
\label{eq:restriction}
i^*  = \iota\inv \circ \biggl(\frac{1}{|\Gamma|}\sum_{\gamma\in\Gamma}\gamma\biggr) .
\end{equation}

Define a map $\map{\norm[*]}{\tilde V_*}{V_*}$ by
\begin{equation}
\label{eq:cochar-norm}
\normcochar = j_*\inv \circ \biggl(\sum_{\gamma\in\Gamma}\gamma\biggr) = |\Gamma|\,\pi .
\end{equation}
Taking duals, we obtain the adjoint map
$\map{\normchar}{V^*}{\tilde V^*}$.
Because of our assumption on $j_*$, we have that $\norm[*]$ takes $\tilde{\bX}_*$ to $\bX_*$,
and thus that $\normchar$ takes $\bX^*$ to $\tilde{\bX}^*$.

Suppose that $\tilde{\bX}_*$ and $\bX_*$ (and hence $\tilde{\bX}^*$ and $\bX^*$) are equipped with
$\Gal(k)$-actions.  Further, suppose that the action of $\Gal(k)$ on $\tilde{\bX}_*$ commutes with
that of $\Gamma$.  It follows that if $j_*$ is equivariant with respect to these actions
of $\Gal(k)$, then so are all of the other maps above.

\begin{example}
\label{ex:fixed-point}

Let $\tilde T$ be a $k$-torus and let $\Gamma$ be a finite group that acts on $\tilde{T}$ via
$k$-automorphisms.  Then $\Gamma$ acts on both
$\bX^*(\tilde T)$ and $\bX_*(\tilde T)$
(and thus on $\tilde V^* = V^*(\tilde T)$ and $\tilde V_*=V_*(\tilde T)$)
via the rules
$$
\gamma\cdot\chi = \gamma^{*\, -1}\chi ,\qquad
\gamma\cdot\lambda = \gamma_*\lambda
$$
for $\chi\in \bX^*(\tilde T)$ and $\lambda\in \bX_*(\tilde T)$.

Let $T$ be the $k$-torus $(\tilde T^\Gamma)\conn$.
Then we have the inclusion map $\map{i}{T}{\tilde{T}}$ and 
a norm map
$\map{\norm[T] = \norm}{\tilde{T}}{T}$
given by 
\begin{equation}
\label{eq:norm}
\norm(t) = \prod_{\gamma\in\Gamma} \gamma(t),
\end{equation}
both of which are defined over $k$.
Let $V^* = V^*(T)$ and $V_*=V_*(T)$.
The map $\map{i}{T}{\tilde{T}}$
induces an inclusion
$\map{i_*}{V_*}{\tilde V_*}$, which gives rise to an isomorphism
$\map{j_*}{V_*}{\tilde V_*{}^\Gamma}$. Then the maps $i_*$ and $i^*$ (resp.~$\norm[*]$ and $\normchar$)
constructed from $j_*$ in this section coincide with those induced by $i$ and $\norm$.
They are all $\Gal(k)$-equivariant.
\end{example}

\section{Finite-group actions on reductive groups}
\label{sec:group-actions}
In this section, we establish a strong form of Statement \eqref{stmt:G-reductive}
from the Introduction.

\begin{defn}
\label{defn:qss}
We say that an automorphism $\gamma$ of a connected reductive algebraic group $\tilde G$ is
\emph{quasi-semisimple} if $\gamma$ preserves a Borel subgroup $\tilde B_\bullet$ of $\tilde{G}$
and a maximal torus $\tilde T_\bullet$ in $\tilde B_\bullet$.
\end{defn}

\begin{rem}
\label{rem:automatically-separable}
If both $\tilde G$ and $\gamma$ are defined over $k$,
then the groups $\tilde B_\bullet$
and $\tilde T_\bullet$ can be chosen to be defined over $k\sep$.
We prove this in almost all cases in Lemma \ref{lem:rational-torus}.
A general proof of Lemma \ref{lem:rational-torus}, as well as proofs of
Lemmas~\ref{lem:steinberg} and~\ref{lem:rational},
can be found in a preprint of Lemaire
\cite{lemaire:twisted-characters}*{\S4.6}.
Our proofs are different.
\end{rem}

\begin{lem}
\label{lem:steinberg}
Suppose $\tilde{G}$ is a connected reductive algebraic group,
and $\gamma$ is a quasi-semisimple automorphism of $\tilde G$.
Let $\tilde{B}_\bullet$ be a $\gamma$-invariant Borel subgroup of $\tilde{G}$,
and $\tilde{T}_\bullet\subseteq \tilde{B}_\bullet$ a $\gamma$-invariant maximal torus of $\tilde{G}$.
Then the group 
$G := (\tilde{G}^\gamma)\conn$ is reductive,
$T_\bullet:= (\tilde T_\bullet^\gamma)\conn = G\cap\tilde T_\bullet$
is a maximal torus in $G$,
and $B_\bullet := (\tilde B_\bullet^\gamma)\conn$ is a Borel subgroup of $G$ containing $T_\bullet$.
\end{lem}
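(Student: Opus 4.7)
The plan is to reduce to an algebraically closed field and then invoke Steinberg's classical results on quasi-semisimple automorphisms. All assertions of the lemma---reductivity of $G$, maximality of $T_\bullet$ in $G$, the equality $T_\bullet = G \cap \tilde{T}_\bullet$, and $B_\bullet$ being a Borel subgroup of $G$ containing $T_\bullet$---are geometric in nature. Since $\gamma$ is a $k$-automorphism, the schemes $\tilde{G}^\gamma$ and $G = (\tilde{G}^\gamma)\conn$ are defined over $k$, and $T_\bullet$, $B_\bullet$ descend to whichever field of definition $\tilde{T}_\bullet$, $\tilde{B}_\bullet$ possess. It therefore suffices to argue after base change to $\bar{k}$, and I may assume $k$ is algebraically closed.

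Over an algebraically closed field, the key input is Steinberg's theorem on fixed points of quasi-semisimple automorphisms (Steinberg, Memoirs AMS no.~80, Theorems 7.5 and 8.1): if $\sigma$ is a quasi-semisimple automorphism of a connected reductive group $H$ and $(B,T)$ is a $\sigma$-stable pair with $T \subset B$, then $(H^\sigma)\conn$ is reductive, $(T^\sigma)\conn$ is a maximal torus of it, and $B^\sigma$ is a Borel subgroup containing $(T^\sigma)\conn$. Applying this with $H = \tilde{G}$, $\sigma = \gamma$, $B = \tilde{B}_\bullet$, and $T = \tilde{T}_\bullet$ immediately yields the reductivity of $G$, the statement that $T_\bullet$ is a maximal torus of $G$, and the statement that $B_\bullet$ is a Borel subgroup of $G$ containing $T_\bullet$.

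It remains to establish the equality $T_\bullet = G \cap \tilde{T}_\bullet$. The inclusion $T_\bullet \subseteq G \cap \tilde{T}_\bullet$ is immediate from the definitions. For the reverse inclusion, any element of $G \cap \tilde{T}_\bullet$ belongs to $\tilde{G}^\gamma$, hence is $\gamma$-fixed, so $G \cap \tilde{T}_\bullet \subseteq \tilde{T}_\bullet^\gamma$. At this point I invoke the further result of Steinberg (Memoirs 80, Theorem 8.2) that $\tilde{T}_\bullet^\gamma$ is \emph{already connected} under the quasi-semisimplicity hypothesis. Combined with $T_\bullet := (\tilde{T}_\bullet^\gamma)\conn$, this gives $\tilde{T}_\bullet^\gamma = T_\bullet$, and thus $G \cap \tilde{T}_\bullet \subseteq T_\bullet$.

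The main obstacle is the connectedness of $\tilde{T}_\bullet^\gamma$: this is precisely where the hypothesis that $\gamma$ preserves a Borel subgroup containing the torus---not merely the torus itself---is used in an essential way, through Steinberg's analysis of the induced action on the root datum. Once that ingredient is available, the rest of the argument is standard descent together with the structural consequences of Steinberg's theorem.
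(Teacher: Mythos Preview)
Your reduction to $\bar k$ and your appeal to Steinberg for the reductivity of $G$ and for $(T_\bullet, B_\bullet)$ being a Borel--torus pair are in the right spirit; the paper's proof ultimately rests on Steinberg's Theorem~8.2 as well. However, your argument for the equality $T_\bullet = G\cap\tilde{T}_\bullet$ contains a genuine gap. You invoke Steinberg's Theorem~8.2 to conclude that $\tilde{T}_\bullet^\gamma$ is already connected, but that theorem is stated for \emph{simply connected} semisimple groups, and the connectedness assertion is false for general reductive $\tilde{G}$. A simple counterexample: take $\tilde{G} = \mathbb{G}_m$ with $\gamma(z) = z^{-1}$; this is trivially quasi-semisimple, yet $\tilde{T}_\bullet^\gamma = \mu_2$ is disconnected whenever $\chr k \neq 2$. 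Less degenerately, take $\tilde{G} = \mathbb{G}_m \times \mathrm{SL}_2$ with $\gamma$ inverting the first factor and acting trivially on the second; the standard Borel--torus pair is $\gamma$-stable, but $\tilde{T}_\bullet^\gamma = \mu_2 \times T_{\mathrm{diag}}$ is disconnected. So the step ``$\tilde{T}_\bullet^\gamma = T_\bullet$'' fails, and with it your deduction of $G\cap\tilde{T}_\bullet \subseteq T_\bullet$.

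The paper handles this by first passing to a central isogeny $\phi\colon\tilde{Z}\times\tilde{G}'\to\tilde{G}$ with $\tilde{G}'$ simply connected, lifting $\gamma$ uniquely to the cover, applying Steinberg's Theorem~8.2 to $\tilde{G}'$ (where the simply-connected hypothesis is met), and then taking images under $\phi$. Alternatively, you can repair your argument without the isogeny: once your step~2 establishes that $G$ is connected reductive with maximal torus $T_\bullet$, observe that $G\cap\tilde{T}_\bullet$ lies in the commutative group $\tilde{T}_\bullet$ and contains $T_\bullet$, hence centralizes $T_\bullet$, hence lies in $C_G(T_\bullet) = T_\bullet$.
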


\begin{proof}
Let $\tilde G'$ be the simply connected cover of the derived group of $\tilde G$, and let $\tilde Z$ be the
identity component of the center of $\tilde G$.  Then we have a central isogeny
$\map\phi{\tilde{Z}\times\tilde{G}'}{\tilde{G}}$.  By~\cite{steinberg:endomorphisms}*{\S9.16},
the restriction of $\gamma$ to the derived group of $\tilde G$
lifts uniquely to an automorphism of $\tilde G'$, and thus
$\gamma$ lifts uniquely to an automorphism of $\tilde{Z}\times\tilde{G}'$.
Moreover, $\tilde{G}'$ contains a maximal torus $\tilde{T}'_\bullet$
and Borel subgroup $\tilde{B}'_\bullet$ such that
$\tilde{Z} \times \tilde{T}'_\bullet$ and
$\tilde{Z} \times \tilde{B}'_\bullet$ 
are the inverse images under $\phi$ of $\tilde{T}_\bullet$ and $\tilde{B}_\bullet$.
Note that $\gamma$ preserves $\tilde{T}'_\bullet$
and $\tilde{B}'_\bullet$.
From Steinberg \cite{steinberg:endomorphisms}*{Theorem 8.2},
we know that $G' := \tilde{G}'^\gamma$ is a connected reductive group, and it
is clear [\loccit, Remark 8.3(a)]
that $T'_\bullet:= \tilde T'_\bullet{}^\gamma = G'\cap\tilde T'_\bullet$
is a maximal torus in $G'$
and $B'_\bullet := \tilde B'_\bullet{}^\gamma$ is a Borel subgroup of $G'$ containing $T'_\bullet$.
We obtain our desired result by
considering
$\phi( (\tilde Z^\gamma)\conn \times T'_\bullet )$
and
$\phi( (\tilde Z^\gamma)\conn \times B'_\bullet )$.
\end{proof}

\begin{lem}
\label{lem:rational}
Suppose $\tilde{G}$ is a connected reductive $k$-group,
and $\gamma$ is a quasi-semisimple $k$-automorphism of $\tilde G$.
Then the group 
$G := (\tilde{G}^\gamma)\conn$ is defined over $k$.
\end{lem}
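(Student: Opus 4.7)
My plan is to apply Lemma \ref{lem:steinberg} over $k\sep$ and then invoke Galois descent. The separably quasi-semisimple hypothesis provides a $\gamma$-invariant Borel-torus pair $(\tilde{B}_\bullet,\tilde{T}_\bullet)$ defined over $k\sep$. The proof of Lemma \ref{lem:steinberg} goes through verbatim with $k\sep$ in place of an algebraically closed field: the central isogeny from the simply-connected cover of $\tilde{G}_{k\sep}$, its unique $\gamma$-equivariant structure, the lifted Borel-torus pair, and the invocation of Steinberg's reductivity theorem for the simply-connected group all make sense over any field over which a $\gamma$-invariant Borel-torus pair is defined. Hence $G_{k\sep} := ((\tilde{G}_{k\sep})^\gamma)\conn$ is a smooth closed $k\sep$-subgroup of $\tilde{G}_{k\sep}$.

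I next verify that $G_{k\sep}$ is stable under $\Gal(k\sep/k)$ as a closed subvariety of $\tilde{G}_{k\sep}$. Since $\gamma$ is a $k$-automorphism, it commutes with the Galois action on $\tilde{G}(k\sep)$, so the set $\tilde{G}^\gamma(k\sep)$ of $\gamma$-fixed points is Galois-stable. Because Galois also fixes the identity, it preserves the connected component of the identity in $\tilde{G}^\gamma_{k\sep}$, whose $k\sep$-points are $G(k\sep)$; by smoothness of $G_{k\sep}$ over the separably closed field $k\sep$, these points are Zariski-dense, so $G_{k\sep}$ itself is a Galois-stable closed subvariety of $\tilde{G}_{k\sep}$. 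Standard Galois descent then produces a closed subvariety $G_0\subseteq\tilde{G}$ defined over $k$ with $G_0\otimes_k k\sep = G_{k\sep}$, and fpqc descent of smoothness along the faithfully flat inclusion $k\hookrightarrow k\sep$ shows $G_0$ is smooth over $k$. This $G_0$ is the desired $k$-form of $G$.

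The main subtlety — and the whole reason we require the separable refinement of quasi-semisimplicity rather than plain quasi-semisimplicity — is precisely the passage from geometric to separable information in imperfect characteristic. Without the separable hypothesis one would only have Lemma \ref{lem:steinberg} available over $\bar{k}$, and a purely inseparable descent from $\bar{k}$ to $k\sep$ is not automatic (even though smoothness descends, one must first know the subvariety is cut out by ideals defined over $k\sep$). With the separable hypothesis in hand, one lands immediately in the regime where $\Gal(k\sep/k)$-descent applies and the argument is routine.
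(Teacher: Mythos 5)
Your Galois-descent step (from $k\sep$ to $k$) matches the paper's opening reduction and is correct, but the crucial earlier step is a genuine gap. You assert that ``the proof of Lemma \ref{lem:steinberg} goes through verbatim with $k\sep$ in place of an algebraically closed field,'' and use this to conclude that $G_{k\sep}$ is a smooth closed $k\sep$-subgroup. That is precisely the nontrivial content of Lemma \ref{lem:rational}, and it cannot be waved through: Steinberg's Theorem 8.2, which Lemma \ref{lem:steinberg} invokes, is a statement over an algebraically closed field. It tells you that the \emph{reduced} connected fixed-point variety is a reductive group \emph{over $\bar k$}. When $k$ is imperfect, the scheme-theoretic fixed points $\tilde{G}^\gamma$ need not be smooth, and the passage to the reduced subscheme is not an operation that automatically descends to $k\sep$ (the reduced subscheme of a group scheme over an imperfect field need not be a subgroup, let alone defined over the given field). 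So saying the argument ``makes sense over $k\sep$'' is exactly the claim that must be proven, not a free observation.

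The paper's proof does this work explicitly: after reducing to $k=k\sep$, it shows that $T_\bullet$ is defined over $k$ by identifying it as the subtorus of $\tilde{T}_\bullet$ cut out by the free part of the $\gamma$-coinvariants of $\bX^*(\tilde T_\bullet)$; then, using that $\tilde{T}_\bullet$ is split over $k\sep$, it chooses $k$-pinnings $x_\alpha$ of the root groups compatibly with the $\gamma$-action, and uses Steinberg's explicit description of the root groups $U_\beta$ of $G$ (in part $(2'''')$ of the proof of his Theorem 8.2) to give each a $k$-structure. This produces an open dense $k$-subvariety $Y = T_\bullet\prod U_\beta$ of $G$ with $Y(k)$ dense in $G$, and \cite{springer:lag}*{Lemma~11.2.4} then yields that $G$ is defined over $k$. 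Your proof is missing this entire argument. To repair it, you would need to either (i) carry out the Steinberg-style construction of the open cell over $k\sep$ as the paper does, or (ii) give a separate argument showing the fixed-point subgroup scheme is smooth over $k\sep$ under the separably-quasi-semisimple hypothesis, neither of which is ``verbatim.''
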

\begin{proof}
Let $\tilde T_\bullet$ and $\tilde B_\bullet$ be as in Definition~\ref{defn:qss}.
From Remark \ref{rem:automatically-separable},
we may assume that these groups are
defined over $k\sep$.
Since $\gamma$ is defined over $k$, we have that $\Gal(k)$ preserves $G$.
Therefore,
it will be enough to show that $G$ is defined over $k\sep$ (see~\cite{springer:lag}*{Prop.~11.2.8(i)}).
We may therefore
assume that $k=k\sep$.

Let $T_\bullet$ and $B_\bullet$ be as in Lemma \ref{lem:steinberg}.
Consider
the free part of the module of $\gamma$-coinvariants
in the lattice $\bX^*(\tilde T_\bullet)$.
Since this is a lattice, and it is a quotient of $\bX^*(\tilde T_\bullet)$,
it is the character lattice of a $k$-subtorus of $\tilde T_\bullet$,
and this subtorus is precisely $T_\bullet$.
That is, $T_\bullet$ is defined over $k$.
Since $\tilde T_\bullet$ is split over $k$, the root groups $U_\alpha$
for each $\alpha\in\Phi(\tilde G,\tilde T_\bullet)$
are defined over $k$,
and
there exist $k$-isomorphisms $x_\alpha$ from the 
additive group to each $U_\alpha$.
Moreover, if $\alpha$ is a root whose $\gamma$-orbit has size $r$, then
since $\gamma$ is defined over $k$, we may select the automorphisms
$x_{\gamma^i\alpha}$ so that $\gamma^i\circ x_\alpha = x_{\gamma^i\alpha}$
for $i = 0, \dots ,r-1$.  It follows from the 
description of the root groups for $G$ with respect to $T_\bullet$ given, say,
in part $(2'''')$
of the proof of \cite{steinberg:endomorphisms}*{Theorem 8.2},
that each such root group $U_\beta$ ($\beta\in\Phi(G,T_\bullet)$)
inherits a $k$-structure from that of $\tilde G$.
Thus, the product $Y = T_\bullet \prod U_\beta$ of $T_\bullet$ with all of the
root groups $U_\beta$ (in any order) is an open $k$-variety in $G$.
As a result,
$Y(k) \subset G(\bar k)\cap \tilde G(k)$ is dense in $Y$ by~\cite{springer:lag}*{Theorem~11.2.7},
and hence dense in $G$.  It follows
[\loccit, Lemma~11.2.4(ii)] that $G$ is defined over $k$.
\end{proof}

Note that if $\gamma$ is not quasi-semisimple, then $G$ need not be reductive.  For example,
suppose
$\tilde{G} = \GL(2)$, $g\in \tilde{G}(k)$ is a nontrivial unipotent element of finite order, and
$\gamma = \Int(g)$.

We now restrict our attention to automorphisms of finite order.
\begin{prop}
\label{prop:arb-gp-action-well-posed}
Suppose $\tilde{G}$ is a connected reductive $k$-group,
and $\Gamma$ is a finite group
that acts on $\tilde{G}$ via $k$-automorphisms
that preserve
a common Borel subgroup $\tilde B_\bullet$ of $\tilde{G}$
and maximal torus $\tilde T_\bullet$ in $\tilde B_\bullet$.
Let $G = (\tilde{G}^\Gamma)\conn$.
Then:
\begin{enumerate}[(i)]
\item
\label{item:G-reductive}
$G$ is a reductive $k$-group.
\item
\label{item:BT-Borus}
For every Borel-torus pair
$(\tilde{B},\tilde{T})$ in $\tilde{G}$ preserved by
$\Gamma$, we have that
$((\tilde{B}^\Gamma)\conn, (\tilde{T}^\Gamma)\conn)$
is a Borel-torus pair for $G$.
\item
\label{item:Ttilde-torus}
Let $T$ be a maximal torus in $G$, and let
$\tilde{T} = C_{\tilde{G}}(T)$.
Then $\tilde{T}$ is a maximal torus
in $\tilde{G}$.  
\item
\label{item:roots}
Let $T$ and $\tilde{T}$ be as in (\ref{item:Ttilde-torus}).  Then 
each root in $\Phi(G,T)$ is the restriction
to $T$ of a root in $\Phi(\tilde G,\tilde T)$.
\item
\label{item:Borel}
Let $\tilde{T}$ be as in (\ref{item:Ttilde-torus}).
Then there is some Borel subgroup $\tilde{B}$ of $\tilde{G}$
containing $\tilde{T}$ such that
$(\tilde{B},\tilde{T})$ is a Borel-torus pair preserved by $\Gamma$.
\end{enumerate}
\end{prop}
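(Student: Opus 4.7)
The plan is to reduce (\ref{item:G-reductive}) and (\ref{item:BT-Borus}) to the single-automorphism case covered by Lemmas \ref{lem:steinberg} and \ref{lem:rational}, via induction on $|\Gamma|$. For the inductive step, pick a normal subgroup $\Gamma'\trianglelefteq\Gamma$ of prime index and let $\gamma\in\Gamma$ lift a generator of $\Gamma/\Gamma'$. Applying the inductive hypothesis to the action of $\Gamma'$ on $\tilde G$ (which still preserves $(\tilde B_\bullet,\tilde T_\bullet)$), one obtains a connected reductive $k$-group $G':=(\tilde G^{\Gamma'})\conn$, and part (\ref{item:BT-Borus}) for $\Gamma'$ yields that $(\tilde B_\bullet^{\Gamma'},(\tilde T_\bullet^{\Gamma'})\conn)$ is a Borel-torus pair of $G'$; since each element of $\Gamma'$ is a $k\sep$-automorphism, this pair is defined over $k\sep$. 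The automorphism $\gamma$ normalizes $\Gamma'$, so it restricts to a $k$-automorphism of $G'$ stabilizing the Borel-torus pair above, and hence is separably quasi-semisimple on $G'$. Lemmas \ref{lem:steinberg} and \ref{lem:rational} applied to $(G',\gamma)$ then yield (\ref{item:G-reductive}) for $G=(G'^\gamma)\conn=(\tilde G^\Gamma)\conn$, along with (\ref{item:BT-Borus}) for the pair $(\tilde B_\bullet,\tilde T_\bullet)$. To extend (\ref{item:BT-Borus}) to an arbitrary $\Gamma$-stable Borel-torus pair $(\tilde B,\tilde T)$ not assumed defined over $k\sep$, run the same induction using only Lemma \ref{lem:steinberg} (which has no rationality hypothesis); the $k$-structure on $G$ is already in hand.

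For (\ref{item:Ttilde-torus}), since maximal $k\sep$-tori of $G$ are all $G(k\sep)$-conjugate, it suffices to treat $T=T_\bullet:=(\tilde T_\bullet^\Gamma)\conn$, in which case the task is to show $C_{\tilde G}(T_\bullet)=\tilde T_\bullet$. The nontrivial inclusion reduces to proving that no root $\alpha\in\Phi(\tilde G,\tilde T_\bullet)$ restricts to zero on $T_\bullet$. By (\ref{eq:restriction}), the restriction $\alpha|_{T_\bullet}$ vanishes iff $\sum_{\gamma\in\Gamma}\gamma\cdot\alpha=0$ in $V^*(\tilde T_\bullet)$. But $\Gamma$ stabilizes $\tilde B_\bullet$, hence the set of positive roots, so the $\Gamma$-orbit of $\alpha$ consists entirely of roots of a single sign; the corresponding orbit sum is then a nonzero element of the root lattice, a contradiction. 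Part (\ref{item:roots}) follows by the same reduction together with a second induction on $|\Gamma|$ that invokes Steinberg's description of root groups of the fixed subgroup (part $(2'''')$ of the proof of \cite{steinberg:endomorphisms}*{Theorem~8.2}) at each stage, exhibiting each root of $G$ with respect to $T_\bullet$ as the restriction of a root of $\tilde G$ with respect to $\tilde T_\bullet$. Finally, (\ref{item:Borel}) follows at once: writing $T=\lsup g T_\bullet$ with $g\in G(k\sep)$, part (\ref{item:Ttilde-torus}) gives $\tilde T=\lsup g\tilde T_\bullet$, and the Borel $\tilde B:=\lsup g\tilde B_\bullet$ contains $\tilde T$ and is $\Gamma$-stable because $g$ is $\Gamma$-fixed and $\tilde B_\bullet$ is $\Gamma$-stable.

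The main obstacle is the root-regularity argument for (\ref{item:Ttilde-torus}): verifying that the connected fixed subtorus $T_\bullet$ is regular in $\tilde G$. The key observation is that $\Gamma$-stability of the Borel $\tilde B_\bullet$ forces every $\Gamma$-orbit of roots of $\tilde T_\bullet$ in $\tilde G$ to lie on one side of the positive/negative split, which precludes any vanishing orbit sum. Everything else is bookkeeping around Lemmas \ref{lem:steinberg} and \ref{lem:rational} together with standard facts about conjugacy of maximal tori.
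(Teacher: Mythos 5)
Your argument for parts (\ref{item:Ttilde-torus}) and (\ref{item:Borel}) agrees with the paper's (same orbit-sum criterion for regularity of $T_\bullet$, same conjugation), and your remark that (\ref{item:roots}) follows by composing Steinberg's root-group description through the inductive stages is in the right spirit. The gap is in the induction itself for parts (\ref{item:G-reductive}) and (\ref{item:BT-Borus}): you want a normal subgroup $\Gamma'\trianglelefteq\Gamma$ with cyclic (prime) quotient so that the top step reduces to a \emph{single} quasi-semisimple automorphism of $G'$. Such a $\Gamma'$ exists only when $\Gamma$ has nontrivial abelianization. If $\Gamma$ is a nonabelian simple group---for instance $\Gamma=A_5$ permuting the factors of $\mathrm{SL}_2^{\,5}$, which preserves the obvious Borel--torus pair---there is no proper nontrivial normal subgroup, so the induction cannot even start. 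The same obstruction reappears in your ``second induction'' for (\ref{item:roots}). So the proof as written covers only the case where every composition factor of $\Gamma$ is cyclic, i.e., $\Gamma$ solvable.

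The paper's proof takes a different route precisely to deal with this. It first separates the pure permutation part (showing $\tilde G^\Gamma=\tilde G^{S_r\times\Gamma_1}$ after decomposing $\tilde G$ into $k$-simple factors), then uses the subnormal-series reduction to get down to $\Gamma$ simple acting on a $k$-simple $\tilde G$. At that point it examines the map $\Gamma\to S_m$ onto permutations of the absolutely simple $k\sep$-factors: if it is injective, a direct argument identifies $G$ with a product of copies of a single factor; if it is trivial, $\Gamma$ injects into the inner automorphisms in $\tilde B_\bullet$ or into a product of Dynkin-diagram symmetry groups, both solvable, so $\Gamma$ is cyclic and the one-automorphism lemmas apply. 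You would need to add an argument of this kind (or some other handling of the nonabelian simple case) to make the induction go through. One minor additional point: in (\ref{item:Ttilde-torus}) and (\ref{item:Borel}) the conjugating element $g$ should a priori be taken in $G(\bar k)$, not $G(k\sep)$, since an arbitrary maximal torus need not be a $k\sep$-torus.
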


\begin{rems}\ 
\label{rems:big-thm}
\begin{enumerate}[(i)]
\item
The reductivity of $G$ is proved by Prasad and Yu \cite{prasad-yu:actions}
under somewhat different hypotheses.
Rather than assuming that $\Gamma$ fixes a Borel-torus pair, they assume that $|\Gamma|$
is not divisible by $\chr k$.
When $\chr k > 0$, their hypotheses
are neither stronger nor weaker than ours.

\item
\label{item:tildeToverk}
One can choose $T$ in part
(\ref{item:Ttilde-torus}) to be defined over $k$,
in which case the torus $\tilde{T}$ is also defined over $k$.
\end{enumerate}
\end{rems}

\begin{proof}
As in the proof of Lemma~\ref{lem:rational}, if $G$ is defined over $k\sep$, then it is defined over $k$.
Therefore, it will be enough to prove this result under the assumption that
$k=k\sep$.

Let $\tilde{Z}$ denote the identity component
of the center of $\tilde{G}$,
and $\tilde{G}'$ the derived subgroup of $\tilde{G}$.
Then we obtain an action of $\Gamma$ on $\tilde{Z}\times\tilde{G}'$
via $k$-automorphisms.
Let $C$ be the kernel of the central $k$-isogeny $\abmap{\tilde{Z}\times\tilde{G}'}{\tilde{G}}$.
Then the image of $((\tilde{Z}\times\tilde{G}')^\Gamma) \conn$ under the isogeny
is a normal subgroup of $\tilde{G}^\Gamma$; that its index is finite follows from the finiteness
of $H^1(\Gamma, C(\bar k))$ (see~\cite{weibel:homological-algebra}*{Cor.~6.5.10}). It follows that this image
is precisely $G$.
Thus, it is enough to prove the proposition with $\tilde{Z}\times\tilde{G}'$ in place of $\tilde{G}$.
Since the proposition is clear for $\tilde{Z}$, we may replace $\tilde{G}$ by $\tilde{G}'$.
That is, we may and do assume that $\tilde{G}$ is semisimple. Moreover, we note that the action of
$\Gamma$ on $\tilde G$ can be lifted to an action on the simply connected
cover of $\tilde G$ that satisfies all of the hypotheses of this theorem.
It follows from reasoning similar to that
above that we may further assume that $\tilde G$ is simply connected.
In particular, $\tilde{G}$ is a direct product of almost-simple groups, and $\Gamma$ permutes these.

Let $(\tilde{B}_\bullet,\tilde{T}_\bullet)$ denote a $\Gamma$-invariant
Borel-torus pair in $\tilde{G}$.

We first prove (\ref{item:Ttilde-torus}) and (\ref{item:Borel}).
Let $T_\bullet $ be the torus $(\tilde{T}_\bullet^\Gamma)\conn$.
We first show that
$T_\bullet$ is maximal in $G$
and
$C_{\tilde{G}}(T_\bullet) = \tilde{T}_\bullet$.
We prove the latter by showing that the root system 
$\Phi(C_{\tilde{G}}(T_\bullet),\tilde{T}_\bullet)$
is empty.
Let $\Phi^+$ denote the positive subsystem of $\Phi(C_{\tilde{G}}(T_\bullet),\tilde{T}_\bullet)$
determined
by $\tilde{B}_\bullet$.
Suppose for a contradiction that $\Phi^+$ is nonempty, and
choose $\alpha \in \Phi^+$.
Let $\chi = \sum_{\gamma\in \Gamma} \gamma\cdot\alpha$.
Since $\Gamma$ preserves $\tilde{B}_\bullet$, we have that
$\Gamma$ preserves $\Phi^+$, so $\chi$ is a positive linear combination
of elements of $\Phi^+$, and is thus nonzero.

The canonical pairing $\langle\phantom{x},\phantom{x}\rangle$
between $V^*(\tilde T_\bullet)$ and $V_*(\tilde T_\bullet)$ is invariant under the action
of $\Gamma$.
Thus, for all
$\lambda \in \bX_*(T_\bullet) = \bX_*(\tilde{T}_\bullet)^\Gamma$,
$\langle\chi,\lambda\rangle = \sum_{\gamma} \langle\gamma\cdot\alpha,\lambda\rangle
= |\Gamma| \langle\alpha,\lambda\rangle$, which is $0$ since $\alpha$ is a
root of the centralizer of $T_\bullet$.
Since $\chi$ is clearly $\Gamma$-invariant, we may identify it with a vector 
$\chi_0\in V^*(T_\bullet)$
via the map $\iota\inv$
as discussed in~\S\ref{sec:modules}.
(In fact, 
$\chi_0 = |\Gamma|\,i^*\alpha$ by (\ref{eq:restriction}).)
Then $\langle\chi_0,\lambda\rangle
= \langle\chi,\lambda\rangle=0$ for all $\lambda\in\bX_*(T_\bullet)$.  Since
$\langle\phantom{x},\phantom{x}\rangle$ is nondegenerate, it follows that
$\chi_0 = 0$ and hence $\chi = 0$,
a contradiction.

To see that $T_\bullet$ is maximal in $G$, consider a maximal torus $T'_\bullet$ 
containing $T_\bullet$.
Then
$$
T'_\bullet \subseteq C_{\tilde{G}}(T'_\bullet) \subseteq C_{\tilde{G}}(T_\bullet) = \tilde{T}.
$$
Taking identity components of groups of $\Gamma$-fixed points, we see that
$T'_\bullet \subseteq T_\bullet$, and thus the two tori are equal.

Now let $T$ denote an arbitary maximal torus in $G$.
We can write $T = \lsup g T_\bullet$ for some $g\in G(\bar k)$.
Thus,
$C_{\tilde{G}}(T) = \lsup g \tilde{T}_\bullet$, which is a maximal
torus in $\tilde{G}$.  Also, $\lsup g \tilde{B}_\bullet$ is a $\Gamma$-invariant
Borel subgroup of $\tilde G$ containing $\tilde T$.
This proves (\ref{item:Ttilde-torus}) and (\ref{item:Borel}).

We now prove
the other statements of the theorem
simultaneously by a series of reductions.

Suppose $\tilde{G} = \tilde{G}_1 \times \tilde{G}_2$,
a direct product of connected reductive $k$-groups, each
preserved by $\Gamma$.
By induction on the dimension of $\tilde{G}$,
we see that each statement holds for each $\tilde{G}_i$,
and thus for $\tilde{G}$.
It thus only remains to consider the case where $\tilde{G}$ has no
such decomposition.
Recall that we may assume that $\tilde{G}$ is simply connected.
In particular, we may assume that $\tilde{G} = \prod_{i=1}^r \tilde{G}_i$,
a direct product of almost simple $k$-groups which are permuted
transitively by $\Gamma$.

Let $\Gamma_1 = \stab_\Gamma(\tilde{G}_1)$.
For $1\leq i \leq r$, choose $\gamma_i\in \Gamma$ such that
$\gamma_i(\tilde G_1) = \tilde G_i$.
Let $\tilde{H} = \prod_{i=1}^r \tilde{G}_1$.
Then we have a $k$-isomorphism
$\map{\vec\gamma = (\gamma_1,\ldots,\gamma_r)}{\tilde{H}}{\tilde{G}}$.
Since $\Gamma$ is generated by $\Gamma_1$ and $\{\gamma_1,\ldots,\gamma_r\}$,
we have that
$$
\tilde{G}^\Gamma = \vec\gamma \bigl(\diag(\tilde{G}_1^{\Gamma_1})\bigr)
= \vec\gamma\bigl((\tilde{H})^{S_r \times \Gamma_1}\bigr),
$$
where $\map{\diag}{\tilde{G}_1}{\tilde{H}}$ is the diagonal embedding,
and $S_r$ is the symmetric group acting on $\tilde{H}$ by permutation of coordinates.
Thus, we may replace $\tilde{G}$ by $\tilde{H}$ and $\Gamma$ by $S_r\times \Gamma_1$.

Using induction on $|\Gamma|$ and working in stages,
we see that we may always
replace $\Gamma$ with the successive subquotients that occur
in any subnormal series for $\Gamma$.
Thus, we may assume that $\Gamma$ is simple.
Furthermore, from the previous paragraph,
we may assume either 
$\Gamma$ either acts via permutations of coordinates,
or 
preserves the simple factors of $\tilde{G}$ and acts in the same way on each.
In the former case, the proposition is clear.
Thus, we assume that we are in the latter case,
and we may assume from above that $\tilde{G}$ is almost simple.

Since $\Gamma$ is simple,
it either consists of inner automorphisms
(in which case it embeds in an isogenous image of $\tilde{T}_\bullet$),
or it embeds in the symmetry group of the Dynkin diagram of $\tilde{G}$.
In each case, $\Gamma$ is solvable, and thus cyclic.
Thus,
(\ref{item:G-reductive})
and
(\ref{item:BT-Borus})
follow from
Lemma~\ref{lem:steinberg} and Lemma~\ref{lem:rational}, and (\ref{item:roots})
follows from
the description of the root groups of $G$
in~\cite{steinberg:endomorphisms}*{\S8.2(2$''''$)}.
\end{proof}

\begin{defn}
If $\tilde G$ and $\Gamma$ are as in Proposition~\ref{prop:arb-gp-action-well-posed}, we
say the action of $\Gamma$ on $\tilde G$ is \emph{quasi-semisimple}.
\end{defn}

\section{Parascopy: Definition}
\label{sec:parascopy-defn}

We now axiomatize the essential properties of the relationship between the
group $G$ and the pair $(\tilde{G},\Gamma)$ of \S\ref{sec:group-actions}.

\begin{defn}
\label{defn:parascopy}
Let
$\Psi = (\bX^*, \Phi,\bX_*,\Phi^\vee)$
and
$\tilde\Psi = (\tilde\bX^*, \tilde\Phi,\tilde\bX_*,\tilde\Phi^\vee)$
be 
root data with $\Gal(k)$-actions, and $\Gamma$ a finite group.  Let $V^* = \bX^*\otimes\Q$,
$V_* = \bX_*\otimes\Q$, $\tilde V^* = \tilde\bX^*\otimes\Q$, $\tilde V_* = \tilde\bX_*\otimes\Q$.
A \emph{parascopic datum} for the triple $(\tilde\Psi,\Gamma,\Psi)$ is a pair
$(\phi,j_*)$, where
\begin{itemize}
\item
$\phi$ is a homomorphism from $\Gamma$ to $\Aut (\tilde\Psi)$,
such that $\phi(\Gamma)$ commutes with the action of $\Gal(k)$ and preserves some system of
positive roots in $\tilde\Phi$; and
\item
$\map{j_*}{V_*}{\tilde V_*^{\phi(\Gamma)}}$
is a $\Gal(k)$-equivariant isomorphism
\end{itemize}
satisfying the following two conditions.
\begin{condition}{P1}
$j_* (\bX_*) \supseteq \tilde\bX_*^{\phi(\Gamma)} $.
\end{condition}
Composing $j_*$ with the inclusion map $\abmap{\tilde V_*^{\phi(\Gamma)}}{\tilde V_*}$,
we obtain a map 
$\map{i_*}{V_*}{\tilde V_*}$
whose transpose
$\map{i^*}{\tilde V^*}{V^*}$ is assumed to satisfy:
\begin{condition}{P2}
$i^*\bigl(\tilde\Phi\bigr) \supseteq \Phi$.
\end{condition}
Let $G$ and $\tilde{G}$ be connected reductive $k$-groups, and $\Gamma$ a finite group.
Let $T$ (resp.~$\tilde T$) be a maximal $k$-torus of $G$ (resp.~$\tilde G$).  Let $\Psi(G,T)$
(resp.~$\Psi(\tilde G,\tilde T)$)
be the root datum of $G$ (resp.~$\tilde G$) relative to $T$ (resp.~$\tilde T$).
These root data come equipped with an action of $\Gal(k)$.
We will refer to a parascopic datum $(\phi,j_*)$ for the triple $(\Psi(\tilde G,\tilde T),\Gamma,\Psi(G,T))$
as a \emph{parascopic datum for $(\tilde{G},\Gamma,G)$
relative to the tori $\tilde T\subseteq \tilde{G}$ and $T\subseteq G$}.
We will say that $G$ is
a \emph{weakly parascopic group} for the pair $(\tilde{G},\Gamma)$
if such a parascopic datum exists, and we will feel free not to specify a particular datum if
it is clear from the context.
\end{defn}

\begin{examples}
\label{ex:weak-parascopy}
\ 
\begin{enumerate}[(a)]
\item
\label{item:group-action}
Suppose
$\Gamma$ acts quasi-semisimply on $\tilde{G}$,
and $G = (\tilde{G}^\Gamma)\conn$, as in \S\ref{sec:group-actions}.
From Proposition \ref{prop:arb-gp-action-well-posed} and Remark \ref{rems:big-thm}(\ref{item:tildeToverk}),
we can choose maximal $k$-tori $T\subseteq G$ and $\tilde{T}\subseteq \tilde{G}$
such that
$\Gamma$ preserves $\tilde{T}$, and
$T = (\tilde{T}^\Gamma)\conn$.
Then the given action $\map{\phi}{\Gamma}{\Aut_k(\tilde{T})}$,
together with the map $\map{j_*}{V_*(T)}{V_*(\tilde T)}$ induced by the inclusion
$\abmap{T}{\tilde{T}}$,
form a parascopic datum.
\item
\label{item:levi}
If $G$ is a Levi subgroup of $\tilde{G}$, then $G$ is weakly parascopic
for $(\tilde{G},1)$
with respect to an obvious parascopic datum.
\item
\label{item:isogenous}
If $G$ is the image under a central
$k$-isogeny of a weakly parascopic group for $(\tilde{G},\Gamma)$,
then $G$ is itself weakly parascopic
for $(\tilde{G},\Gamma)$.
\item
Our definition does not refer to any action of $\Gamma$ on $\tilde{G}$,
but
if $\tilde{G}$ is $k$-quasisplit
then one can indeed lift $\phi$ to a map $\abmap{\Gamma}{\Aut_k(\tilde{G})}$.
Different choices of lifting can lead to groups of fixed points $\tilde{G}^{\phi(\Gamma)}$
whose identity components are non-isomorphic.
We will show elsewhere
\cite{adler-lansky:lifting2}
that a particular lifting $\map{\phi_0}{\Gamma}{\Aut_k(\tilde{G})}$, one that fixes a pinning,
has the following
property.
If $\phi$ denotes any lifting, then
$(G^{\phi(\Gamma)})\conn$ is weakly parascopic
for
$((G^{\phi_0(\Gamma)})\conn, 1)$ in a natural way.
For example, considering two actions of $\Z/2\Z$ on $\GL(2n)$,
we will see that $\mathrm{SO}(2n)$ is parascopic for $(\mathrm{Sp}(2n),1)$.
\end{enumerate}
\end{examples}

In \S\ref{sec:parascopy-properties},
we will define and study a natural notion of equivalence between parascopic data.

\section{Finite-group actions and Weyl groups}
\label{sec:weyl-embedding}
Suppose that $\tilde\Psi = (\tilde\bX^*, \tilde\Phi,\tilde\bX_*,\tilde\Phi^\vee)$
and $\Psi = (\bX^*, \Phi,\bX_*,\Phi^\vee)$ are root data with $\Gal(k)$-action, and $\Gamma$ is a finite group.
Let $(\phi,j_*)$ be a parascopic datum for $(\tilde\Psi,\Gamma, \Psi)$.
As indicated in~\S\ref{sec:setup}, to ease notation, we will suppress reference to $\phi$ when
considering the action of $\Gamma$.
From now on, use the map $j_*$ to to identify
$V_*$ with $\tilde V_*^\Gamma$,
and the map $\iota$ of \S\ref{sec:modules} to identify $V^*$ with $(\tilde V^*)^\Gamma$.
Since $\Gamma$ preserves the root system $\tilde\Phi$, it acts on the Weyl group $\tilde W$ of $\tilde\Psi$.

In the particular situation where $\Gamma$ acts on $\tilde G$
and $G = (\tilde G^\Gamma)\conn$,
 there is an obvious embedding
$W(G,T)\longrightarrow W(\tilde G,\tilde T)^\Gamma$.  Here $w\in W(G,T)$ corresponds to the unique
element of $W(\tilde G,\tilde T)^\Gamma$ whose action when restricted to $T$ coincides
with that of $w$.  We now show that we have such an embedding in the more general setting
of parascopy.

Let $\alpha$ be a root in $\Phi$.
Then $\alpha = i^*\tilde\alpha$ for some root $\tilde\alpha$
in $\tilde\Phi$ by Condition P2 in the definition of parascopic datum.
There are two cases
to consider:
\begin{enumerate}
\item The roots in $\Gamma\cdot\tilde\alpha$ are mutually orthogonal.
\item The roots in $\Gamma\cdot\tilde\alpha$ are not mutually orthogonal.
\end{enumerate}

In case (1), let $\Xi = \Gamma\cdot\tilde\alpha$.  

In case (2),
\cite{kottwitz-shelstad:twisted-endoscopy}*{\S1.3} implies that for each 
$\theta\in\Gamma\cdot\tilde\alpha$, there exists a unique root 
$\theta'\neq\theta$ in $\Gamma\cdot\tilde\alpha$ such that $\theta$ and $\theta'$ are
not orthogonal.  Moreover, $\theta+\theta'$ is a root in $\Phi$ and does
not belong to $\Gamma\cdot\tilde\alpha$.  
Let $\Xi = \{\theta+\theta'\mid\theta\in\Gamma\cdot\tilde\alpha\}$.

\begin{rem}
Although it is assumed in~\cite{kottwitz-shelstad:twisted-endoscopy}
that $\Gamma$ is cyclic, there is only one case in which the action of the stabilizer in a
general group $\Gamma$ of 
an irreducible component of $\Phi$ need not factor through a cyclic quotient
(recall that $\Gamma$ must preserve a positive system of roots):
namely, when the component is of type $D_4$.  One easily checks that in this situation, case (1) holds.
\end{rem}

\begin{rem}
\label{rem:psi}
We note that in both cases, $\Xi$ is an orbit of mutually orthogonal roots.
\end{rem}

\begin{lem}
\label{lem:multiple}
Suppose that 
$\alpha \in \Phi$.
Then
with $\tilde\alpha$ and $\Xi$ as above, we have
$$
\sum_{\beta \in\Xi} \beta\spcheck = \frac{|\Xi|}{|\Gamma \cdot \tilde\alpha|} \alpha\spcheck.
$$
\end{lem}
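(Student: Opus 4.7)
The plan is to show that both sides of the identity lie along the line $\Q\alpha^\vee \subset V_*(T)$, then pin down the scalar by pairing with $\alpha$.

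First, I would simplify the LHS in a unified way. In case~(1), $\Psi = \Gamma \cdot \tilde\alpha$, so $\sum_{\beta \in \Psi}\beta^\vee = \sum_{\tilde\beta \in \Gamma\cdot\tilde\alpha}\tilde\beta^\vee$ trivially. In case~(2), each $\beta \in \Psi$ has the form $\theta + \theta'$ for a unique pair of equal-length, non-orthogonal roots $\theta, \theta' \in \Gamma \cdot \tilde\alpha$ whose sum is again a root. Since $\theta, \theta', \theta+\theta'$ then share a common length, one obtains $(\theta + \theta')^\vee = \theta^\vee + (\theta')^\vee$ by an $A_2$-type calculation. Summing over $\Psi$ and noting that each element of $\Gamma \cdot \tilde\alpha$ appears in exactly one pair gives $\sum_{\beta \in \Psi}\beta^\vee = \sum_{\tilde\beta \in \Gamma\cdot\tilde\alpha}\tilde\beta^\vee$ in case~(2) as well. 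Call this common vector $X$; since $\Gamma$ permutes $\Gamma \cdot \tilde\alpha$, $X \in V_*(\tilde T)^\Gamma = i_*(V_*(T))$, and I let $Y := i_*^{-1}(X) \in V_*(T)$.

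Next, I would compute $\langle \alpha, Y\rangle = \langle \iota \alpha, X\rangle$. Substituting $\iota \alpha = \frac{1}{|\Gamma \cdot \tilde\alpha|}\sum_{\tilde\delta}\tilde\delta$ and expanding, the inner pairings $\langle \tilde\delta, \tilde\beta^\vee\rangle$ are determined by the orbit structure: in case~(1) only the diagonal term contributes (value $2$); in case~(2) each $\tilde\beta$ also has a unique non-orthogonal partner contributing $-1$, alongside the diagonal $2$. In both cases the total works out to $\langle \alpha, Y\rangle = 2|\Psi|/|\Gamma \cdot \tilde\alpha|$, matching $\langle \alpha, \tfrac{|\Psi|}{|\Gamma \cdot \tilde\alpha|} \alpha^\vee\rangle$.

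The subtlest step is to show $Y \in \Q\alpha^\vee$. For this I would consider the Weyl element $r := \prod_{\beta \in \Psi} s_\beta \in W(\tilde G, \tilde T)$, a well-defined commuting product by Remark~\ref{rem:psi}; since $\Gamma$ permutes $\Psi$, $r$ lies in $W(\tilde G, \tilde T)^\Gamma$, and by Corollary~\ref{cor:T-vs-tilde-T-general} it induces a unique automorphism of $V^*(\tilde T)^\Gamma \cong V^*(T)$. A direct computation, using orthogonality of the roots in $\Psi$ and $\Gamma$-invariance, shows that this induced automorphism is a reflection on $V^*(T)$ with $(-1)$-eigenspace $\Q \alpha$ and fixed hyperplane $\{v : \langle v, Y\rangle = 0\}$. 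Using the identity $r(\iota \alpha') = \iota(i^*(r \tilde \alpha'))$ for each $\alpha' = i^*\tilde\alpha' \in \Phi(G,T)$, one then argues that $r$'s action on $V^*(T)$ permutes $\Phi(G, T)$, and hence, by uniqueness of root-system reflections, equals $s_\alpha$. Matching fixed hyperplanes forces $Y \in \Q \alpha^\vee$, and combining with the pairing from the previous paragraph gives $Y = \tfrac{|\Psi|}{|\Gamma \cdot \tilde\alpha|} \alpha^\vee$.

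The main obstacle lies in the last step, specifically in verifying that $r$ permutes $\Phi(G, T)$. Condition P2 guarantees only the containment $\Phi(G,T) \subseteq i^*\Phi(\tilde G, \tilde T)$, which can be strict, so $i^*(r \tilde \alpha')$ is a priori only guaranteed to land in the larger set $i^*\Phi(\tilde G, \tilde T)$. Closing the gap calls for exploiting that $r$ commutes with $\Gamma$---and hence preserves the set of $\Gamma$-orbits in $\Phi(\tilde G, \tilde T)$---together with the reductivity of $G$, so that the collection of orbits whose restriction is a root of $G$ is itself preserved by $r$.
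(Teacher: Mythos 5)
Your steps~1 and~2 are correct and agree numerically with the paper's computation. In particular, the coroot additivity $(\theta+\theta')^\vee = \theta^\vee + (\theta')^\vee$ used in case~(2) is valid since $\theta$, $\theta'$, and $\theta+\theta'$ share a common length, and your pairing computation $\langle\alpha,Y\rangle = 2|\Psi|/|\Gamma\cdot\tilde\alpha|$ matches what one gets from the paper's argument, which instead works directly with $\Psi$ (via the mutual orthogonality of $\Psi$ from Remark~\ref{rem:psi}) rather than reducing to $\Gamma\cdot\tilde\alpha$. These are minor variations.

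The real divergence, and the real gap, is your step~3. The paper handles the proportionality $\sum_{\beta\in\Psi}\beta^\vee \in \Q\alpha^\vee$ in one short step: from $\sum_\Psi\beta = |\Gamma\cdot\tilde\alpha|\,\alpha$ (equation~\eqref{eq:alpha}), together with the observation that $\Psi$ is a single $\Gamma$-orbit of roots of common length, it deduces the corresponding proportionality for coroots. You instead try to establish that $r = \prod_\Psi w_\beta$ restricted to $V^*(T)$ is the root-system reflection $w_\alpha$, and read off the proportionality from matching $(-1)$-eigenspaces. This is essentially the content of Proposition~\ref{prop:weyl-embedding} — but the paper proves that proposition \emph{using} Lemma~\ref{lem:multiple}, so your route is at serious risk of circularity. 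More concretely, you identify the sticking point yourself: to conclude $r|_{V^*(T)} = w_\alpha$ from the eigenspace condition you need $r$ to permute $\Phi(G,T)$, and Condition~P2 gives only the containment $\Phi(G,T) \subseteq i^*\Phi(\tilde{G},\tilde{T})$, which can be strict. Your proposed patch (that $r$ commutes with $\Gamma$ and hence preserves the collection of $\Gamma$-orbits whose restriction to $T$ is a root of $G$) is too vague to close this: the set $\Phi(G,T)$ is additional data in the parascopic setup, and nothing in the axioms forces it to be stable under $r$ without already knowing the Weyl-group embedding you are trying to establish. So as written this is a genuine gap, and it is not a small one; the cleaner path is the paper's, deducing the coroot proportionality directly from \eqref{eq:alpha} and the single-orbit/common-length structure of $\Psi$ rather than through the Weyl-group action.
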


\begin{proof}
By \eqref{eq:iota}, we have
\begin{equation}
\label{eq:alpha}
\sum_{\beta\in\Xi}\beta = \sum_{\beta\in\Gamma\cdot\tilde\alpha}\beta
 = |\Gamma\cdot\tilde\alpha|\alpha.
\end{equation}
Since $\sum_\Xi\beta$ is a multiple of $\alpha$, and the roots in $\Xi$ all have the same length,
it follows that 
$\sum_\Xi\beta^\vee$ is a multiple of $\alpha^\vee$.
To determine this multiple we let $\beta_0 \in \Xi$, and compute
\begin{align*}
\Bigl\langle \alpha,\sum_{\beta\in\Xi} \beta^\vee\Bigr\rangle 
&= \frac{1}{|\Gamma\cdot\tilde\alpha|}
\Bigl\langle \sum_{\beta'\in\Xi}\beta',\sum_{\beta\in\Xi} 
\beta^\vee\Bigr\rangle
&& \text{(by \eqref{eq:alpha})} \\
&=  \frac{|\Xi|}{|\Gamma\cdot\tilde\alpha|}
\left\langle \beta_0, \beta_0^\vee\right\rangle
&& \text{(by Remark \ref{rem:psi})} \\
&= \frac{|\Xi|}{|\Gamma\cdot\tilde\alpha|}
\left\langle \alpha, \alpha^\vee\right\rangle .
\end{align*}
Our result follows.
\end{proof}

\begin{lem}
\label{lem:fidelite}
The natural action of $\tilde W^\Gamma$ on $(\tilde V^*)^\Gamma$ is faithful.
\end{lem}
When $\Gamma$ is cyclic, this is~\cite{steinberg:endomorphisms}*{\S1.32(a)}.  
\begin{proof}
Let $w$ be a nontrivial element of $\tilde W^\Gamma$.
Let $\tilde\Delta$ be a $\Gamma$-invariant set of simple roots in $\tilde\Phi$
(guaranteed to exist by Definition~\ref{defn:parascopy}). Then there exists $\tilde\alpha\in \tilde\Delta$
such that $w(\tilde\alpha)\in -\tilde\Delta$.
It follows that $w(\gamma\cdot\tilde\alpha) = \gamma\cdot (w \tilde\alpha)$ belongs to $-\tilde\Delta$
for every $\gamma\in\Gamma$.
Let
$v = \sum\gamma\cdot\tilde\alpha\in \tilde V^*{}^\Gamma$, where
the sum runs over all $\gamma\in\Gamma$.
Then $w(v)$ is a linear combination of roots in $\tilde\Delta$ in which all
of the coefficients are nonpositive.  In particular, $w(v)\neq v$.
\end{proof}

Let $W$ denote the Weyl group of $\Psi$.

\begin{prop}
\label{prop:weyl-embedding}
There is a natural $\Gal(k)$-equivariant embedding
$W\longrightarrow \tilde W^\Gamma$.  Under this map,
the image of the reflection $w_\alpha$ through the root $\alpha\in\Phi$ is 
\begin{equation}
\label{eq:weyl-lift}
\tilde w = \prod_{\beta\in\Xi}w_\beta,
\end{equation}
where $\Xi$ is as above and the product is taken in any order.
\end{prop}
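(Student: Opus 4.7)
The approach is to define the map on reflection generators by the formula \eqref{eq:weyl-lift}, verify it gives a well-defined $\Gamma$-fixed Weyl element that restricts correctly to the reflection $w_\alpha$, and then extend the assignment to a homomorphism using the faithful action of $W(\tilde G,\tilde T)^\Gamma$ on $V^*(\tilde T)^\Gamma$. First, I would check that $\tilde w := \prod_{\beta\in\Psi}w_\beta$ is well-defined and lies in $W(\tilde G,\tilde T)^\Gamma$: the product is order-independent because the roots in $\Psi$ are mutually orthogonal (Remark \ref{rem:psi}), so the $w_\beta$ pairwise commute, and $\tilde w$ is $\Gamma$-invariant because $\Psi$ is a single $\Gamma$-orbit (immediate in case (1), and true in case (2) since the pairing $\theta\mapsto\theta'$ is canonical, hence $\Gamma$-equivariant).

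Next, I would show that $\tilde w$ restricts on $V^*(\tilde T)^\Gamma$ to the reflection $w_\alpha$ on $V^*(T)$ under the identification $V^*(T)\cong V^*(\tilde T)^\Gamma$ of \S\ref{sec:modules}. For $v \in V^*(\tilde T)^\Gamma$, $\Gamma$-invariance of the pairing together with transitivity of the $\Gamma$-action on $\Psi$ forces $\langle v,\beta^\vee\rangle$ to be a single constant $c$ for all $\beta\in\Psi$; mutual orthogonality of the $\beta$'s then gives
$$
\tilde w(v) = v - c\sum_{\beta\in\Psi}\beta.
$$
A short calculation combining \eqref{eq:alpha} (which yields $\iota(\alpha) = |\Gamma\cdot\tilde\alpha|^{-1}\sum_{\beta\in\Psi}\beta$), Lemma \ref{lem:multiple} (to relate $\sum_{\beta\in\Psi}\beta^\vee$ to $i_*(\alpha^\vee)$), and the pairing-compatibility of $\iota$ and $i_*$, then checks that the right-hand side coincides with $\iota$ applied to $w_\alpha(v_0) = v_0 - \langle v_0,\alpha^\vee\rangle\alpha$, where $v_0 \in V^*(T)$ is the preimage of $v$ under $\iota$.

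Finally, I would promote these reflection assignments to a group homomorphism using injectivity of the restriction map $W(\tilde G,\tilde T)^\Gamma \to \Aut_\Q(V^*(\tilde T)^\Gamma)$. Corollary \ref{cor:T-vs-tilde-T-general} provides this, although its statement is framed in the setup of \S\ref{sec:group-actions} where $\Gamma$ acts on all of $\tilde G$, a feature not assumed in the bare parascopic setting. To apply it, one lifts $\phi$ to a $k$-action of $\Gamma$ on $\tilde G$ preserving a Borel-torus pair --- a lift that exists by Example (d) following Definition \ref{defn:parascopy}, and is compatible with the condition that $\phi(\Gamma)$ preserves a positive system in $\Phi(\tilde G,\tilde T)$. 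Given this injectivity, the reflection assignments determine a unique homomorphism $W(G,T)\to W(\tilde G,\tilde T)^\Gamma$, and injectivity of the resulting map is immediate because its composition with the restriction map is the tautological embedding $W(G,T)\hookrightarrow\Aut_\Q(V^*(T))$. The main obstacle is precisely this last step: securing injectivity of the restriction map in full parascopic generality, since Corollary \ref{cor:T-vs-tilde-T-general} does not apply directly; once that is settled, the remaining verifications are routine matchings of explicit formulas for reflections.
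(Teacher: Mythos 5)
Your proof follows essentially the same approach as the paper's: define $\tilde w$ by \eqref{eq:weyl-lift}, verify it acts on $V^*(\tilde T)^\Gamma$ exactly as $w_\alpha$ does under the identification $V^*(T)\cong V^*(\tilde T)^\Gamma$, and invoke the injectivity of $W(\tilde G,\tilde T)^\Gamma \longrightarrow \Aut_\Q(V^*(\tilde T)^\Gamma)$ from Corollary~\ref{cor:T-vs-tilde-T-general} to recognize the image as a subgroup and extract the embedding. You correctly flag that the paper applies Corollary~\ref{cor:T-vs-tilde-T-general} without explicitly observing that the parascopic setting only gives $\phi(\Gamma)\subseteq\Aut_k(\tilde T)$, so one must first lift $\phi$ to a $\Gamma$-action on $\tilde G$ preserving a Borel-torus pair --- a step the authors treat as implicit (cf.\ Example (d) after Definition~\ref{defn:parascopy}), and which your argument handles the same way.
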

\begin{proof}
We may identify
$\Aut_\Q (V^*)$
with 
$\Aut_\Q (\tilde V^*{}^\Gamma)$,
and thus identify 
$W$ with a subgroup of the latter.
By Lemma~\ref{lem:fidelite}, there is a natural injection
$$
\tilde W^\Gamma\longrightarrow\Aut_\Q (\tilde V^*{}^\Gamma).
$$
To construct an embedding $W\longrightarrow \tilde W^\Gamma$, it is therefore
enough to show that the image of this injection contains $W$.
Thus, given $w\in W$, we will show that there exists $\tilde w \in \tilde W^\Gamma$
whose action on $\tilde V^*{}^\Gamma$ coincides with that of $w$.
It suffices to prove the existence of $\tilde w$ only in the case in which $w$ is a reflection
$w_\alpha$ through a root $\alpha\in\Phi$.  In this case, our candidate for $\tilde w$ is given by 
(\ref{eq:weyl-lift}).

Let $v\in V^* = \tilde V^*{}^\Gamma$.  Since the roots in the orbit $\Xi$ are
orthogonal, we have
\begin{equation}
\label{eq:tilde_w}
\tilde w(v) = v - \sum_{\beta\in\Xi}\langle v,\beta^\vee\rangle\beta .
\end{equation}
Since the pairing $\langle\phantom{x},\phantom{x}\rangle$ is $\Gamma$-invariant and
$v$ is $\Gamma$-fixed, the right-hand side of (\ref{eq:tilde_w}) is equal to
\begin{equation}
\label{eq:tilde_w_2}
v - \langle v,\beta_0^\vee\rangle \sum_{\beta\in\Xi}\beta .
\end{equation}
for any root $\beta_0\in\Xi$.
From \eqref{eq:alpha},
\begin{align*}
\langle v,\beta_0^\vee\rangle \sum_{\beta\in\Xi}\beta
 &= |\Gamma\cdot\tilde\alpha| \langle v,\beta_0^\vee\rangle\alpha\nonumber\\
 &= \frac{|\Gamma\cdot\tilde\alpha|}{|\Xi|}
\Bigl\langle v, \sum_{\beta\in\Xi}\beta^\vee\Bigr\rangle
\alpha  \\
&= \langle v,\alpha^\vee\rangle\alpha
&& \text{(by Lemma \ref{lem:multiple})}.
\end{align*}
Hence
by (\ref{eq:tilde_w}) and (\ref{eq:tilde_w_2}), we have that $\tilde w(v) = w_\alpha(v)$.
It follows that $\tilde w_\alpha = \tilde w$.

The $\Gal(k)$-equivariance of this embedding follows from the explicit formula~\eqref{eq:weyl-lift}.
\end{proof}

\section{Stable conjugacy classes of maximal $k$-tori}
\label{sec:stability}

The statement and proof of
the next result
are essentially the same as 
those for
\cite{reeder:elliptic-centralizers}*{Proposition 6.1}.
We include a proof here only because our hypotheses and conclusion are slightly different.
Similar proofs can be found in 
\cite{howard:thesis}*{\S2} and \cite{raghunathan:tori}.

\begin{prop}
\label{prop:stable-tori}
Let $G$ denote a connected reductive group over a field $k$.
Let $S$ be a maximal $k$-torus of $G$.
Then there is a natural injection $\abmap{\TTst(G,k)}{H^1(k,W(G,S))}$.
This map is a surjection when $G$ is $k$-quasisplit.
\end{prop}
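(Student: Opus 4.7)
The plan is to construct the map $T \mapsto [w_T]$ via the classical Galois-descent recipe, verify well-definedness and injectivity by direct manipulations of cocycles, and treat surjectivity---the main obstacle---using the quasisplit hypothesis.

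\emph{Construction and behavior under stable conjugacy.} Given a maximal $k$-torus $T \subseteq G$, all maximal tori are $G(k\sep)$-conjugate, so fix $g \in G(k\sep)$ with $\lsup g S = T$. Since $\sigma(T) = T$ for every $\sigma \in \Gal(k)$, the element $c_T(\sigma) := g\inv \sigma(g)$ normalizes $S$; a direct check shows $c_T \in Z^1(k, N_G(S))$. Let $w_T \in Z^1(k, W(G,S))$ be its image under the quotient $N_G(S) \twoheadrightarrow W(G,S)$. Two choices $g, g'$ for the same $T$ differ by right multiplication by an element of $N_G(S)(k\sep)$, altering $c_T$ by a coboundary in $N_G(S)$ and hence leaving $[w_T]$ unchanged. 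If $T' = \lsup h T$ is stably conjugate to $T$ via $h \in G(k\sep)$, then $\Int(h)|_T$ being a $k$-isomorphism forces $h\inv \sigma(h) \in C_G(T) = T$; using $hg$ as witness for $T'$ yields $c_{T'}(\sigma) = (g\inv h\inv \sigma(h) g) \cdot c_T(\sigma)$ with the first factor in $g\inv T g = S(k\sep)$, so $w_{T'} = w_T$ as cocycles. Thus $T \mapsto [w_T]$ descends to a well-defined map on $\TTst(G,k)$.

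\emph{Injectivity.} If $[w_{T_1}] = [w_{T_2}]$, I adjust the witnesses $g_1, g_2$ by elements of $N_G(S)(k\sep)$ so that $w_{T_1} = w_{T_2}$ as cocycles, which forces $c_{T_2}(\sigma)\, c_{T_1}(\sigma)\inv \in S(k\sep)$ for every $\sigma$. Setting $h := g_2 g_1\inv$, a direct calculation gives
\[
h\inv \sigma(h) \;=\; g_1 \bigl(c_{T_2}(\sigma)\, c_{T_1}(\sigma)\inv\bigr) g_1\inv \;\in\; g_1 S g_1\inv \;=\; T_1,
\]
so $\Int(h)$ restricts to a $k$-isomorphism $T_1 \to T_2$, exhibiting $T_1$ and $T_2$ as stably conjugate.

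\emph{Surjectivity when $G$ is $k$-quasisplit.} This is the main obstacle, and the quasisplit hypothesis is essential here. Given $[w] \in H^1(k, W(G,S))$, the aim is to produce $g \in G(k\sep)$ with $\sigma \mapsto g\inv \sigma(g)$ lifting $w$ modulo $S$, for then $T := \lsup g S$ is a maximal $k$-torus with $[w_T] = [w]$. Equivalently, one must lift $[w]$ through the cohomology sequence associated to $1 \to S \to N_G(S) \to W(G,S) \to 1$ (obstructed by a class in $H^2(k, S)$) to a cocycle $c \in Z^1(k, N_G(S))$ whose further image in $H^1(k, G)$ is trivial. A $k$-rational pinning of $G$---available because $G$ is quasisplit---supplies Tits-style representatives of Weyl elements in $N_G(S)(k\sep)$ that interact compatibly with the Galois action, yielding an explicit cocycle lift $c$; the triviality of $[c]$ in $H^1(k, G)$ is then verified using the same pinning data. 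Any trivialization $c(\sigma) = g\inv \sigma(g)$ produces the required $T$.
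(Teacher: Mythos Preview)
Your construction of the map and the injectivity argument are correct and essentially match the paper's, which dispatches these steps by analogy with the preceding proposition.

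The surjectivity argument, however, has a genuine gap. A $k$-rational pinning of $G$ is based at a specific Borel--torus pair $(B_0,T_0)$ defined over $k$; the Tits representatives it supplies lie in $N_G(T_0)(k\sep)$, not in $N_G(S)(k\sep)$ for an arbitrary maximal $k$-torus $S$. If $S$ is not contained in a $k$-rational Borel subgroup---which is the typical case---no pinning is available at $S$, and your lifting recipe does not apply as stated. The paper handles this in two steps: first it proves surjectivity for the special torus $T_0$ by invoking Raghunathan's theorem \cite{raghunathan:tori}*{Thm.~1.1}; then, for arbitrary $S$, it constructs an explicit bijection of sets $H^1(k,W(G,T_0)) \to H^1(k,W(G,S))$ (via $f \mapsto (\sigma \mapsto h f(\sigma)\sigma(h)^{-1})$ for any $h$ with $\lsup h T_0 = S$) compatible with the respective maps from $\TTst(G,k)$, transferring surjectivity from $T_0$ to $S$.

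Even restricted to $S=T_0$, your sketch is essentially an outline of Raghunathan's argument with the two hard steps asserted rather than proved: that the Tits-style lift is actually a cocycle (the section is not a homomorphism, so this is not automatic), and that its image in $H^1(k,G)$ is trivial. Neither follows formally from ``the same pinning data''; both require real work. You should either supply those arguments or, as the paper does, cite the result.
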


\begin{proof}
Let $T$ be a maximal $k$-torus of $G$.
Then there exists $g\in G(k\sep)$ such that $\lsup g S = T$.
Let $f\in Z^1(k,N_G(S))$ be the the cocycle $\sigma\mapsto g\inv\sigma(g)$,
and let $\bar f$ be the image of $f$ in $Z^1(k,W(G,S))$.
Associate to $T$ the class of $\bar f$ in $H^1(k,W(G,S))$.
This class is independent of the particular element $g$.
The proof that this cohomology class uniquely determines and is determined by the stable conjugacy class of $T$
is a standard exercise (cf.~\cite{raghunathan:tori}*{\S1.1}).
Thus, we have the desired injection.

Suppose that $G$ is $k$-quasisplit.
Then $G$ contains a maximal $k$-torus $T_0$ that is the centralizer of
a maximal $k$-split torus.
Applying the above paragraph to the case of $S=T_0$,
we obtain an injection
$\abmap{\TTst(G,k)}{H^1(k,W(G,T_0))}$,
and this map is a surjection from~\cite{raghunathan:tori}*{Thm.~1.1}.
(Although this surjectivity result is stated only for semisimple groups,
it is easily extended to the reductive case.)

Now suppose $S$ is an arbitrary maximal $k$-torus of $G$.  There is some element $h\in G(k\sep)$ such that
$\lsup h T_0 = S$.  The map $\Int(h)$ then induces a natural bijection (of sets, though not of pointed sets)
$H^1(k,W(G,T_0)) \longrightarrow H^1(k,W(G,S))$ (cf.~\cite{serre:galois}*{Ch~I, Prop.~35}); one sees easily
that this bijection does not depend on the choice of $h$.
Composing the map from the preceding paragraph with this bijection yields the desired bijection.
\end{proof}

\section{Parascopy: Basic properties}
\label{sec:parascopy-properties}

We are now equipped to state and prove the basic properties of parascopy.
In this section,
let $\tilde{G}$ and $G$ denote connected reductive $k$-groups, $\Gamma$ a finite group,
and $\tilde{T}$ and $T$ maximal $k$-tori in $\tilde{G}$ and $G$.
Suppose that $(\phi,j_*)$ is a parascopic datum for $(\tilde{G},\Gamma,G)$ relative
to $\tilde{T}$ and $T$.
Thus, $G$ is weakly parascopic for $(\tilde{G},\Gamma)$.

\begin{defn}
\label{defn:parascopic-group}
Identify $\TTst(G,k)$ and $\TTst(\tilde{G},k)$ with subsets
of $H^1(k,W(G,T))$ and $H^1(k,W(\tilde{G},\tilde{T}))$, respectively, via the injection
of Proposition \ref{prop:stable-tori}. 
Consider the map $\abmap{H^1(k,W(G,T))}{H^1(k,W(\tilde{G},\tilde{T}))}$ induced
by the embedding given in Proposition \ref{prop:weyl-embedding}.
If this map restricts to give a map $\abmap{\TTst(G,k)}{\TTst(\tilde{G},k)}$,
then we will say that $G$ is a \emph{parascopic group} for $(\tilde{G},\Gamma)$.
\end{defn}

In other words, a weakly parascopic group $G$ is parascopic if every maximal $k$-torus in $G$
determines a unique one in $\tilde{G}$, up to stable conjugacy.

\begin{examples}
\label{ex:parascopic-group}
\ 
\begin{enumerate}[(a)]
\item
If $G$ is weakly parascopic for $(\tilde{G},\Gamma)$,
and $\tilde{G}$ is quasisplit over $k$, then
$G$ is parascopic by the surjectivity statement in Proposition \ref{prop:stable-tori}.
\item
In the situation of Proposition \ref{prop:arb-gp-action-well-posed},
$G$ is parascopic by part
(\ref{item:Ttilde-torus}) of this result.
\item
In Examples \ref{ex:weak-parascopy}(\ref{item:group-action},\ref{item:levi}),
$G$ is parascopic for $\tilde{G}$.
\end{enumerate}
\end{examples}

\begin{defn}
\label{defn:equivalent-data}
Let $(\phi',j_*')$ denote another parascopic datum for
$(\tilde G,\Gamma,G)$,
this time relative to the maximal $k$-tori $T'\subseteq G$ and $\tilde{T}'\subseteq \tilde{G}'$.
We say that the parascopic data $(\phi,j_*)$ and $(\phi',j_*')$ are
\emph{equivalent}
if there exist elements $g\in G(k\sep)$ and $\tilde{g}\in \tilde{G}(k\sep)$ satisfying:
\begin{enumerate}[(a)]
\item
$\lsup g T  = T'$ and $\lsup {\tilde{g}} \tilde{T} = \tilde{T}'$.
\item
\label{item:phi-compatible}
For all $\gamma\in \Gamma$,
$\phi'(\gamma) = \Int(\tilde{g})_* \circ \phi(\gamma) \circ \Int(\tilde{g})_*\inv$,
where $\phi(\gamma)$ and $\phi'(\gamma)$ are taken to be automorphisms of $\bX_*(\tilde{T})$
and $\bX_*(\tilde{T}')$.
\item
\label{item:j-compatible}
$j_*' = \Int(\tilde{g})_* \circ j_* \circ \Int(g)_* \inv$.
\end{enumerate}
In this case,
we will say that $(\phi,j_*)$ is equivalent to $(\phi',j_*')$ \emph{via the elements $g$ and $\tilde{g}$.}
\end{defn}

It is straightforward to verify that this is indeed an equivalence relation on the set of parascopic data
for $(\tilde{G},\Gamma,G)$.

Let $g$ and $\tilde g$ be as in Definition~\ref{defn:equivalent-data}. For $\sigma \in \Gal(k)$,
we have that $g\inv\sigma(g)\in N_G(T)(k\sep)$ and
$\tilde g\inv\sigma(\tilde g)\in N_{\tilde G}(\tilde T)(k\sep)$. In the following lemma and the remainder of
the paper, we will use bars to represent the natural maps from normalizers to Weyl groups of maximal tori.

\begin{lem}
\label{lem:equivalence}
Let $(\phi, j_*)$ be a parascopic datum for $(\tilde G,\Gamma,G)$ relative to $\tilde T$ and $T$.
Let $g\in G(k\sep)$ and $\tilde g\in\tilde G(k\sep)$, and suppose that the maximal tori
$T':=\lsup{g} T$ and 
$\tilde T':=\lsup{\tilde g}\tilde T$ are defined over $k$.
Define $\map{\phi'}{\Gamma}{\Aut(\bX_*(\tilde T'))}$ and $\map{j'_*}{V_*(T')}{V_*(\tilde T')}$ as in
Definition~\ref{defn:equivalent-data}(b,c).
Then $(\phi',j'_*)$ is a parascopic datum for $(\tilde G,\Gamma ,G)$
relative to $\tilde T'$ and $T'$
(necessarily equivalent to $(\phi,j_*)$) if and only if
$$
i\bigl(\overline {g\inv\sigma(g)}\bigr) = \overline{\tilde{g}\inv\sigma(\tilde{g})}
$$
for all $\sigma \in \Gal(k)$, 
where $\map{i}{W(G,T)}{W(\tilde{G},\tilde{T})^{\phi(\Gamma)}}$ is the 
embedding given in Proposition~\ref{prop:weyl-embedding}.
\end{lem}
\begin{proof}
Suppose $(\phi',j'_*)$ is a parascopic datum for $(\tilde G,\Gamma ,G)$.
Fix $\sigma\in\Gal(k)$, and let $w = \overline {g\inv\sigma(g)}\in W(G,T)$ and
$\tilde w = \overline{\tilde{g}\inv\sigma(\tilde{g})}\in W(\tilde{G},\tilde{T}) $.
Let $\gamma\in\Gamma$. Since $\phi(\gamma)$ and $\phi'(\gamma)$ are $\Gal(k)$-equivariant, we have
$$
\Int(\tilde{g})_* \circ \phi(\gamma) \circ \Int(\tilde{g})_*\inv = \phi'(\gamma) = 
\Int(\sigma(\tilde g))_* \circ \phi(\gamma) \circ \Int(\sigma(\tilde g))_*\inv .
$$
It follows that $\Int(\tilde{g}\inv\sigma(\tilde g))_*$ is $\phi(\Gamma)$-equivariant and hence that
$\tilde w\in W(\tilde{G},\tilde{T})^{\phi(\Gamma)}$. By Lemma~\ref{lem:fidelite},
to show that $i(w) = \tilde w$, it suffices to show that the actions of $w$ on $V_*(T)$ and of
$\tilde w$ on $V_*(\tilde T)$ correspond under the identification
$\map{j_*}{V_*(T)}{V_*(\tilde T)^{\phi(\Gamma)}}$, i.e.,
that
\begin{equation}
\label{eq:compatible}
j_*\circ w = \tilde w\circ j_*
\end{equation}
as maps
$\abmap{V_*(T)}{V_*(\tilde T)^{\phi(\Gamma)}}$.
But since $j_*$ and $j'_*$ are $\Gal(k)$-equivariant, we have
$$
\Int(\tilde{g})_* \circ j_* \circ \Int(g)_*\inv = j'_*
=\Int(\sigma(\tilde{g}))_* \circ j_* \circ \Int(\sigma(g))_*\inv ,
$$
and \eqref{eq:compatible} follows immediately.

The converse is proved similarly.
\end{proof}

The next result addresses the question:
how large is the equivalence class of the parascopic datum $(\phi,j_*)$?

\begin{prop}
\label{prop:parascopy-equivalence}
\begin{enumerate}[(i)]
\item
\label{item:all-tori-equally-good}
If $G$ is parascopic for $(\tilde{G},\Gamma)$ via the datum $(\phi,j_*)$, then
for every maximal $k$-torus $T'\subseteq G$,
and every element $g\in G(k\sep)$ such that $T' = \lsup{g} T$,
there exists a maximal $k$-torus $\tilde{T}'\subseteq \tilde{G}'$
and a parascopic datum $(\phi',j_*')$ for $(\tilde{G},\Gamma,G)$ relative to $\tilde{T}'$ and $T'$,
such that $(\phi',j_*')$ is 
equivalent to $(\phi,j_*)$ via $g$ and some $\tilde{g} \in \tilde{G}(k\sep)$.
\item
\label{item:Ttilde-stably-determined}
For a maximal $k$-torus $\tilde{T}'\subseteq \tilde{G}$,
there exists a parascopic datum equivalent to $(\phi,j_*)$
relative to 
$\tilde{T}'$ and $T$
if and only if $\tilde{T}'$ is stably conjugate to $\tilde{T}$ in $\tilde{G}$.
\item
\label{item:same-tori-equivalence}
Suppose $(\phi',j_*')$ is another parascopic datum for $(\tilde{G},\Gamma,G)$ relative to $\tilde{T}$ and $T$.
Then the two data are equivalent if and only if
there is an element
$\tilde w \in W(\tilde{G},\tilde{T})^{\Gal(k)}$
such that
$\phi'(\gamma) = \tilde w \circ \phi(\gamma) \circ \tilde w\inv$ for all $\gamma \in \Gamma$,
and
$j_*' =  \tilde w \circ  j_*$.
\end{enumerate}
\end{prop}

\begin{proof}
\begin{enumerate}[(i)]
\item
Since $G$ is parascopic for $(\tilde{G},\Gamma)$, we have a map
$\abmap{\TTst(G,k)}{\TTst(\tilde{G},k)}$ induced by the map
$\abmap{H^1(k,W(G,T))}{H^1(k,W(\tilde{G},\tilde{T})^{\phi(\Gamma)})}$.
Let $\tilde T'$ be a torus in the image of
the class of $T'$.
Pick $\tilde g\in \tilde G(k\sep)$
such that $\lsup{\tilde g} \tilde T = \tilde T'$.
The function $f$ (resp.~$\tilde f$) on $\Gal(k)$ given by $\mapto{f}{\sigma}{\overline{g\inv \sigma(g)}}$
(resp.~$\mapto{\tilde f}{\sigma}{\overline{\tilde g\inv \sigma(\tilde g)}}$) is a cocycle in
$Z^1(k,W(G,T))$ (resp.~$Z^1(k,W(\tilde{G},\tilde{T})^{\phi(\Gamma)})$).
By the definition of the map $\abmap{\TTst(G,k)}{\TTst(\tilde{G},k)}$, $\tilde f$
is cohomologous  to the image of
$f$ in $Z^1(k,W(\tilde{G},\tilde{T}))$. 
Moreover, by adjusting $\tilde g$ by an appropriate element of
$N_{\tilde G}(\tilde T)(k\sep)$, one can arrange for these cocycles to coincide.
In other words, $i\bigl(\overline {g\inv\sigma(g)}\bigr) = \overline{\tilde{g}\inv\sigma(\tilde{g})}$
for all $\sigma\in\Gal(k)$.
But then Lemma~\ref{lem:equivalence} implies that the pair $(\phi', j'_*)$ is a parascopic
datum for $(\tilde G,\Gamma,G)$ relative to $\tilde T'$ and $T'$,
and that it is equivalent to $(\phi,j_*)$.
\item
Suppose such a datum exists, equivalent to $(\phi,j_*)$ via $g\in N_G(T)(k\sep)$
and $\tilde g \in \tilde{G}(k\sep)$.  We may choose $\tilde n\in N_{\tilde G}(\tilde T)(k\sep)$
such that $\overline{\tilde n} = i(\bar g)\inv$.  Since
$i \bigl(\overline{g\inv \sigma(g)  }\bigr) = \overline{\tilde{g}\inv \sigma(\tilde{g})}$
by Lemma~\ref{lem:equivalence},
it follows that $\overline{(\tilde{g}\tilde{n})\inv \sigma(\tilde{g}\tilde{n})} = 1$
in $W(\tilde{G},\tilde{T})$ for all $\sigma\in\Gal(k)$.
Thus the map $\map{\Int(\tilde{g}\tilde{n})}{\tilde{T}}{\tilde{T}'}$ is defined over $k$,
and so $\tilde{T}$ and $\tilde{T}'$ are stably conjugate.
The converse is proved similarly.
\item
Suppose that the equivalence is via the elements $g\in G(k\sep)$ and $\tilde g \in \tilde{G}(k\sep)$.
Then $g$ and $\tilde g$ normalize $T$ and $\tilde{T}$, so we have
elements $w = \overline g$ and $\tilde w' = \overline{\tilde{g}}$ in $W(G,T)$ and $W(\tilde{G},\tilde{T})$.
Then Lemma~\ref{lem:equivalence} implies that
$\sigma ( \tilde w'\cdot i(w)\inv) = \tilde w' \cdot i(w)\inv$
for all $\sigma\in \Gal(k)$. Thus $\tilde w:=\tilde w' \cdot i(w)\inv\in W(\tilde{G},\tilde{T})^{\Gal(k)}$.
Moreover, since $i(w)\in W(\tilde{G},\tilde{T})^{\phi(\Gamma)}$,
it follows that for any $\gamma\in\Gamma$,
$$
\phi'(\gamma) = \tilde w'\circ\phi(\gamma)\circ \tilde w'{}\inv =
\tilde w\circ\phi(\gamma)\circ\tilde w\inv.
$$
By the definition of the embedding $i$,
$$
j'_* = \tilde w'\circ j_*\circ w\inv = (\tilde w'\cdot i(w)\inv)\circ j_* = \tilde w\circ j_*.
$$
The converse is proved similarly.
\qedhere
\end{enumerate}
\end{proof}

\section{Duality}
\label{sec:duality}
Let $G$ be a quasisplit connected reductive $k$-group.
Let $B_0$ denote a Borel $k$-subgroup of $G$,
and $T_0$ a maximal $k$-torus in $B_0$.
Suppose that $(G^*,B_0^*, T_0^*)$ is another triple of such groups.
We say that the two triples are in \emph{$k$-duality} if
there is a $\Gal(k)$-equivariant isomorphism
$\map{\delta_0}{\bX^*(T_0)}{\bX_*(T_0^*)}$ that induces an isomorphism
from the based root datum of $(G,B_0,T_0)$ to the dual of that of $(G^*,B_0^*,T_0^*)$;
that is,
\begin{itemize}
\item $\delta_0$ maps the simple roots in $\Phi(G,T_0)$ with respect to $B_0$ onto the 
simple coroots in $\Phi^\vee(G^*,T^*_0)$ with respect to $B^*_0$.
\item The transpose $\map{\delta_0^*}{\bX^*(T_0^*)}{\bX_*(T_0)}$ of $\delta_0$ maps
the simple roots in $\Phi(G^*,T_0^*)$ with respect to $B_0^*$ onto the 
simple coroots in $\Phi^\vee(G,T_0)$ with respect to $B_0$.
\end{itemize}
Given a triple $(G,B_0,T_0)$, such a triple $(G^*,B_0^*,T_0^*)$
always exists, and is unique up to $k$-isomorphism.
In this situation, we will say that $G^*$ is the \emph{$k$-dual}
of $G$.
We will say that
a pair of maximal $k$-tori
$T\subseteq G$ and $T^*\subseteq G^*$
are in \emph{$k$-duality}
if there is a $\Gal(k)$-equivariant isomorphism
$\map{\delta}{\bX^*(T)}{\bX_*(T^*)}$ such that
\begin{equation}
\label{cond:roots-to-coroots}
\delta (\Phi(G,T)) = \Phi^\vee(G^*,T^*)\quad\mbox{and}\quad 
\delta^* (\Phi(G^*,T^*)) = \Phi^\vee(G,T),
\end{equation}
where $\map{\delta^*}{\bX^*(T^*)}{\bX_*(T)}$ is the transpose of $\delta$.
(Note that this notion of duality of tori depends on the ambient groups
$G$ and $G^*$.)
The isomorphism $\delta$ will be referred to as a
\emph{duality map}.

\begin{rem}
\label{rem:weyl-isomorphism}
Note that a duality map $\delta$ determines a $\Gal(k)$-equivariant isomorphism, also denoted $\delta$,
from $W(G,T)$ to $W(G^*,T^*)$ under which, for each $\alpha\in\Phi(G,T)$,
the reflection $w_\alpha$ is sent to $w_{\delta\alpha}$.
Then for every $w\in W(G,T)$ and every $\chi\in \bX^*(T)$,
$\lsup w \chi = \lsup{\delta(w)} \delta(\chi)$.  In other words, if we identify $W(G,T)$ and $W(G^*,T^*)$
via $w_\alpha\longleftrightarrow w_{\delta\alpha}$, then 
$\map{\delta}{\bX^*(T)}{\bX_*(T^*)}$ is a $W(G,T)$-equivariant map.

We note that when $k$ is a finite field, it is standard
(see~\cites{deligne-lusztig:finite,carter:finite,digne-michel:reps-book}) to work
with an \textit{anti-action} of $W(G,T)$ on $\bX^*(T)$ (satisfying
$\lsup{w_1w_2}\chi = \lsup{w_2}(\lsup{w_1}\chi)$).  This, in turn, makes
the natural map $W(G,T)\longrightarrow W(G^*,T^*)$ an \textit{anti-isomorphism} and 
forces the geometric Frobenius element $F$ to act on the Weyl group $W(G^*,T^*)$
via the \textit{inverse} of its usual action on $W(G,T)$.
However, we consider the
standard action of $W(G,T)$ on $X^*(T)$, which results in the 
isomorphism $\delta$ of Weyl groups in the preceding paragraph.  Using this map to identify
$W(G,T)$ and $W(G^*,T^*)$, one sees easily that $\Gal(k)$ acts in the same way on
these groups.
\end{rem}

\begin{prop}
\label{prop:tori-duality}
There is a canonical one-to-one correspondence
$\TTst(G,k) \longleftrightarrow \TTst(G^*,k)$.
If $T\subseteq G$ and $T^*\subseteq G^*$ correspond, then they are in $k$-duality,
and the duality map $\map\delta{\bX^*(T)}{\bX_*(T^*)}$
is uniquely determined up to the action of $W(G,T)^{\Gal(k)}$.
\end{prop}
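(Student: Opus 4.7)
The plan is to build the correspondence and the duality map in parallel, using Proposition \ref{prop:stable-tori} as the starting point, and then verify the uniqueness at the end.

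First, since both $G$ and $G^*$ are $k$-quasisplit, Proposition \ref{prop:stable-tori} applied to the tori $T_0\subseteq G$ and $T_0^*\subseteq G^*$ gives bijections
\[
\TTst(G,k) \longleftrightarrow H^1(k, W(G,T_0))
\quad\text{and}\quad
\TTst(G^*,k) \longleftrightarrow H^1(k, W(G^*,T_0^*)).
\]
The duality map $\delta_0$, via Remark \ref{rem:weyl-isomorphism}, induces a $\Gal(k)$-equivariant group isomorphism $W(G,T_0)\cong W(G^*,T_0^*)$, which gives a canonical bijection between the two pointed cohomology sets. Composing these three bijections yields the canonical bijection $\TTst(G,k)\longleftrightarrow \TTst(G^*,k)$ asserted in the proposition.

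Next I would construct the duality map on corresponding representatives. Given $T$ in its stable class, choose $g\in G(k\sep)$ with $\lsup g T_0 = T$, and let $f\in Z^1(k,W(G,T_0))$ be the associated cocycle $\sigma\mapsto \overline{g\inv\sigma(g)}$. Let $T^*$ be the chosen representative for the matching class in $\TTst(G^*,k)$, with $\tilde g\in G^*(k\sep)$, $\lsup{\tilde g}T_0^* = T^*$, and cocycle $\tilde f$ cohomologous in $W(G^*,T_0^*)$ to $\delta_0\circ f$. Define
\[
\delta = (\Int(\tilde g))_* \circ \delta_0 \circ (\Int(g))^{*,-1}\colon \bX^*(T)\longrightarrow \bX_*(T^*).
\]
The crux is to show $\delta$ is $\Gal(k)$-equivariant: for $\sigma\in\Gal(k)$, the discrepancy between $\sigma\circ\delta$ and $\delta\circ\sigma$ is precisely the action (via conjugation) of the difference between $\tilde f(\sigma)$ and $\delta_0(f(\sigma))$, and after possibly adjusting $\tilde g$ within its stable class to make these two cocycles equal (not merely cohomologous), this discrepancy vanishes. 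The root/coroot compatibility in \eqref{cond:roots-to-coroots} is inherited from that of $\delta_0$, since $\Int(g)^*$ and $\Int(\tilde g)_*$ are Weyl-equivariant isomorphisms carrying $\Phi(G,T_0)$ to $\Phi(G,T)$ and $\Phi^\vee(G^*,T_0^*)$ to $\Phi^\vee(G^*,T^*)$, respectively.

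Finally, I would handle uniqueness. Any other admissible pair $(g',\tilde g')$ satisfies $g' = gn$ and $\tilde g' = \tilde g\tilde n \cdot c$ for some $n\in N_G(T_0)(k\sep)$, $\tilde n\in N_{G^*}(T_0^*)(k\sep)$, and a correction term $c$ so that the modified cocycle still equals $\delta_0\circ f'$. A direct substitution shows the resulting $\delta'$ differs from $\delta$ by the action of the Weyl element $\overline{g n g\inv}\in W(G,T)$, whose image in $W(G^*,T^*)$ under the induced isomorphism agrees, and the Galois-equivariance of both $\delta$ and $\delta'$ forces this element to lie in $W(G,T)^{\Gal(k)}$. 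Conversely, every element of $W(G,T)^{\Gal(k)}$ arises in this way.

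The main obstacle I anticipate is the Galois-equivariance step: reconciling the two cocycles $\delta_0\circ f$ and $\tilde f$ (which a priori are only cohomologous, not equal) requires choosing the representative $T^*$ and the element $\tilde g$ carefully, and it is this precise choice that must also be shown to be unique up to $W(G,T)^{\Gal(k)}$ rather than up to the potentially larger group $W(G^*,T^*)$.
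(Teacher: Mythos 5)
Your approach matches the paper's: both use Proposition \ref{prop:stable-tori} together with the Weyl-group isomorphism induced by $\delta_0$ (Remark \ref{rem:weyl-isomorphism}) to get the bijection $\TTst(G,k)\leftrightarrow\TTst(G^*,k)$, and both build $\delta$ by conjugating $\delta_0$ after first adjusting $\tilde g$ so that its cocycle in $W(G^*,T_0^*)$ equals $\delta_0\circ f$ on the nose, not just up to coboundary --- this is the crux and you flag it correctly. Two small slips: your displayed formula $\delta=(\Int(\tilde g))_*\circ\delta_0\circ(\Int(g))^{*,-1}$ does not typecheck (the domain of $\delta_0$ is $\bX^*(T_0)$, so you want $\Int(g)^*$, not its inverse; equivalently $\delta(\lsup g\chi)=\lsup{\tilde g}(\delta_0\chi)$ as in the paper). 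And in the uniqueness step, the ambiguity is not carried by $\overline{gng^{-1}}$: replacing $g$ by $gn$ and $\tilde g$ by $\tilde g\tilde n$ with $\overline{\tilde n}=\delta_0(\overline n)$ leaves $\delta$ unchanged, so the entire ambiguity sits in your ``correction term'' $c$, whose Weyl image satisfies the twisted condition $\sigma(\overline c)=f^*(\sigma)^{-1}\overline c\,f^*(\sigma)$, and it is precisely this condition that makes its image in $W(G^*,T^*)\cong W(G,T)$ Galois-fixed. Your conclusion is right, but pinning the ambiguity on the correct element matters for the converse direction you assert.
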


\begin{proof}
Fix maximal $k$-tori $T_0\subseteq G$ and $T_0^*\subseteq G^*$ and
a duality map $\map{\delta_0}{\bX^*(T_0)}{\bX_*(T_0^*)}$ of based root data as in
the definition of $k$-duality above.
As described in Proposition~\ref{prop:stable-tori}, we have a bijection between the
set of stable conjugacy classes of maximal $k$-tori of $G$ (resp.~$G^*$)
and $H^1(k,W(G,T_0))$ (resp.~$H^1(k,W(G^*,T^*_0))$).
Since $\bimap{\delta_0}{W(G,T_0)}{W(G^*,T_0^*)}$
induces an isomorphism
$H^1(k,W(G,T_0))\stackrel{\sim}{\longrightarrow} H^1(k,W(G^*,T_0^*))$
we have the desired correspondence between the sets of stable conjugacy classes
of maximal $k$-tori in $G$ and $G^*$.  

Let $T\subseteq G$ and $T^*\subseteq G^*$ be maximal $k$-tori whose stable
conjugacy classes correspond as in the preceding paragraph.
Then $T = \lsup g T_0$ and $T^* = \lsup {g^*} T^*_0$ for some $g\in G(k\sep)$ and $g^*\in G^*(k\sep)$.
The function $f$ (resp.~$f^*$) on $\Gal(k)$ given by $\mapto{f}{\sigma}{\overline{g\inv \sigma(g)}}$
(resp.~$\mapto{f^*}{\sigma}{\overline{\tilde {g^*}\inv \sigma(\tilde g^*)}}$) is a cocycle in
$Z^1(k,W(G,T))$ (resp.~$Z^1(k,W(G^*,T^*))$). Since $T$ and $T^*$ correspond as above,
the image of $f$ in $Z^1(k,W(G^*,T^*))$ under the isomorphism induced by $\delta_0$
is cohomologous to $f^*$. Moreover, by adjusting $g^*$ by an appropriate element of
$N_{G^*}(T_0^*)(k\sep)$, one can arrange for these cocycles to coincide.

To show that $T$ and $T^*$ are in $k$-duality,
define an isomorphism $\map\delta{\bX^*(T)}{\bX_*(T^*)}$ as follows.
An element of $\bX^*(T)$ can be written in the form
$\lsup{g}\chi$ for a unique $\chi\in\bX^*(T_0)$.
Let
\begin{equation}
\label{eq:delta}
\delta (\lsup{g}\chi) = \lsup{g^*}(\delta_0\chi).
\end{equation}
It is easily verified that $\delta$ satisfies (\ref{cond:roots-to-coroots}).

We need to show that $\delta$ is $\Gal(k)$-equivariant.
Suppose $\sigma\in \Gal(k)$.  Then 
$$
\delta(\sigma(\lsup g\chi))
= \delta\bigl(\lsup{\sigma (g)}(\sigma\chi)\bigr)
= \delta\bigl(\lsup{g g\inv \sigma (g)}(\sigma\chi)\bigr)
= \lsup {g^*}\bigl(\delta_0(\lsup{g\inv \sigma (g)}(\sigma\chi))\bigr)
= \lsup {g^*}\bigl(\delta_0(\lsup{f(\sigma)}(\sigma\chi))\bigr).
$$
On the other hand,
$$
\sigma(\delta(\lsup g\chi)) = \sigma(\lsup{g^*}(\delta_0\chi))
= \lsup{\sigma (g^*)}(\sigma(\delta_0\chi))
= \lsup{\sigma (g^*)}(\delta_0(\sigma \chi)).
$$
It follows that $\delta$ will be $\Gal(k)$-equivariant if and only if
$$
\delta_0(\lsup{f(\sigma)}(\sigma\chi)) = 
\lsup{{g^*}\inv\sigma (g^*)}(\delta_0(\sigma \chi))
= \lsup{f^*(\sigma)}(\delta_0(\sigma \chi))
$$
for all $\sigma\in\Gal(k)$ and $\chi\in\bX^*(T_0)$.
But this equality follows from the equivariance of $\delta_0$ with respect to the
action of $W(G,T_0) = W(G^*,T_0^*)$ (see Remark~\ref{rem:weyl-isomorphism})
and the fact that 
$f^*(\sigma) = \delta_0(f(\sigma))$.
Thus, $\delta$ is a duality map.
Finally, note that varying the choices of the above elements $g$ and $g^*$ has the effect
of altering $\delta$ by an element of $W(G,T)^{\Gal(k)}$.
\end{proof}

\begin{rem}
\label{rem:tori-correspondence}
Given a maximal $k$-torus $S\subseteq G$, let $S^*\subseteq G^*$
correspond to $S$ via Proposition \ref{prop:tori-duality}.
Repeating the proof with $S$ and $S^*$ in place of $T_0$ and $T_0^*$, we see
the following.
If the maximal $k$-torus $T\subseteq G$ corresponds to $T^*\subseteq G^*$,
and we write $T = \lsup g S$ with $g\in G(k\sep)$,
then there exists $g^* \in G^*(k\sep)$ such that
$T^* = \lsup{g^*}S^*$, and
the cocycle that sends $\sigma\in\Gal(k)$ to the image in $W(G,S)$
of $g\inv \sigma(g)$ corresponds to the analogous cocycle determined by $g^*$
under the identification between $W(G,S)$ and $W(G^*,S^*)$.
Thus, this latter identification leads to the same correspondence
$\TTst(G,k) \longleftrightarrow \TTst(G^*,k)$ of Proposition \ref{prop:tori-duality}.
Moreover, if $\delta_S$ and $\delta_T$ are duality maps for $S$ and $T$,
then there is some $w\in W(G,S)^{\Gal(k)}$ such that
for all $\chi \in \bX^*(S)$, we have
$\lsup{g^* w} \delta_S(\chi) = \delta_T(\lsup g \chi)$.
Since $\chi\mapsto \lsup w \delta_S(\chi)$ is another duality map for $S$,
we may replace $\delta_S$ by it
and then we can write
$\lsup{g^*} \delta_S(\chi) = \delta_T(\lsup g \chi)$
in analogy with (\ref{eq:delta}).
\end{rem}

\section{The conorm map}
\label{sec:conorm}
In this section, let $\tilde{G}$ and $G$ be quasisplit connected reductive $k$-groups.
Let
$\Gamma$ be a finite group,
and suppose that $(\phi,j_*)$ is a parascopic datum
for $(\tilde{G},\Gamma,G)$
relative to the maximal $k$-tori
$\tilde T\subseteq\tilde G$ and $T\subseteq G$.

\begin{rem}
\label{rem:nonquasisplit}
We assume that our groups are quasisplit only to assure that they have duals satisfying
Proposition \ref{prop:tori-duality}.
But such duals exist for some other groups (a matter that we will take up elsewhere),
and in such cases we only need to make the weaker assumption that $G$ is parascopic for $(\tilde{G},\Gamma)$
via the datum $(\phi,j_*)$.
\end{rem}

\begin{rem}
\label{rem:square}
Given a maximal $k$-torus $S\subseteq G$,
one has from
Proposition
\ref{prop:parascopy-equivalence}(\ref{item:all-tori-equally-good},\ref{item:Ttilde-stably-determined})
a maximal $k$-torus $\tilde S \subseteq \tilde{G}$,
uniquely determined up to stable conjugacy.
By Proposition \ref{prop:tori-duality},
$S$ and $\tilde S$ determine
maximal $k$-tori
$S^* \subseteq G^*$
and
$\tilde{S}^* \subseteq \tilde{G}^*$
(up to stable conjugacy),
and duality maps
$\map\delta{\bX^*(S)}{\bX_*(S^*)}$
and
$\map{\tilde\delta}{\bX^*(\tilde{S})}{\bX_*(\tilde{S}^*)}$
(up to conjugacy by
$W(G,S)^{\Gal(k)}$ and $W(\tilde{G},\tilde{S})^{\Gal(k)}$, respectively).
Similarly, given a maximal $k$-torus $S^*\subseteq G^*$, one obtains
$S\subseteq G$ and thus all of the other data above.
\end{rem}

Using $T$ and $\tilde{T}$,
choose
$T^*$,
$\tilde{T}^*$,
$\delta$,
and $\tilde\delta$
as in the preceding Remark.
From
\S\ref{sec:modules},
the parascopic datum $(\phi,j_*)$ determines a
$\Gal(k)$-equivariant
map $\map{\normchar = \normchar[T]}{\bX^*(T)}{\bX^*(\tilde T)}$.
Define
$$
\map{\dnormcochar[T^*,] 
:=\tilde\delta\circ\normchar\circ\delta\inv}{\bX_*(T^*)}{\bX_*(\tilde T^*)}.
$$
Then $\dnormcochar[T^*,]$ determines a \emph{conorm} homomorphism
$$
\map{\dnorm[T^*]}{T^*}{\tilde T^*}.
$$
Since both $\delta$ and $\tilde\delta$ are
$\Gal(k)$-equivariant, so is $\dnormcochar[T^*,] $.  Hence $\dnorm[T^*]$ is defined over $k$.
We also have a corresponding map $\map{\dnormchar[T^*]}{\bX^*(\tilde T^*)}{\bX^*(T^*)}$.
More explicitly,
$$
\dnormchar[T^*] = {\delta^*}\inv\circ\normcochar\circ\tilde\delta^*,
$$
where $\map{\normcochar}{\bX_*(\tilde T)}{\bX_*(T)}$ is the adjoint of $\normchar$.

From Remark \ref{rem:weyl-isomorphism},
the duality maps $\delta$ and $\tilde\delta$
determine
identifications $W(G,T) \longrightarrow W(G^*,T^*)$
and $W(\tilde G,\tilde T) \longrightarrow W(\tilde G^*,\tilde T^*)$.
Thus, the embedding $i$ of $W(G,T)$
in $W(\tilde G,\tilde T)$ (see \S\ref{sec:weyl-embedding}) determines an embedding
(which we will also denote by $i$)
of $W(G^*,T^*)$ in $W(\tilde G^*,\tilde T^*)$.

\begin{rem}
\label{rem:weyl-equivariant}
If we identify $W(G^*,T^*) = W(G,T)$ with its image in
$W(\tilde G^*,\tilde T^*) = W(\tilde G,\tilde T)$ under $i$,
it is clear that
$\normchar$ is $W(G,T)$-equivariant.
Since $\delta$ and $\tilde\delta$ are
$W(G,T)$-equivariant (given the identifications in the preceding paragraph),
it follows that $\dnorm[T^*]$ is as well.
\end{rem}

Let $s\in T^*(\bar k)$ and let $\tilde s = \dnorm[T^*](s)\in\tilde T^*(\bar k)$.  Let 
$H^* = C_{G^*}(s)$ and $\tilde H^* = C_{\tilde G^*}(\tilde s)$.

\begin{prop}
\label{prop:centralizer-weyl}
The embedding $i:W(G^*,T^*)\longrightarrow W(\tilde G^*,\tilde T^*)$
restricts to give embeddings of $W(H^*,T^*)$ in $W(\tilde H^*,\tilde T^*)$
and of $W(\Hsc,T^*)$ in $W(\Htsc,\tilde T^*)$.
\end{prop}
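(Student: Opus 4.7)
The plan is to handle the two embeddings separately: the one for the full centralizer follows from a general equivariance property, while the one for the connected centralizer requires identifying the reflections in the image explicitly and computing how the conorm interacts with them.

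For the full centralizer, $W(H^*,T^*) = \{w\in W(G^*,T^*): w(s) = s\}$ and likewise $W(\tilde H^*,\tilde T^*) = \{\tilde w : \tilde w(\tilde s) = \tilde s\}$. By Remark~\ref{rem:weyl-equivariant}, the conorm $\dnorm[T^*]$ is $W(G^*,T^*)$-equivariant with respect to the action on $\tilde T^*$ through the embedding $i$; explicitly, $\dnorm[T^*](w\cdot t) = i(w)\cdot\dnorm[T^*](t)$ for $w\in W(G^*,T^*)$ and $t\in T^*(\bar k)$. Hence if $w(s)=s$, then $i(w)\cdot\tilde s = \dnorm[T^*](w(s)) = \tilde s$, so $i(w)\in W(\tilde H^*,\tilde T^*)$.

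For the connected centralizer, the standard description of centralizers of semisimple elements gives $\Phi(\Hsc,T^*) = \{\alpha^*\in\Phi(G^*,T^*) : \alpha^*(s)=1\}$, and $W(\Hsc,T^*)$ is generated by the corresponding reflections $w_{\alpha^*}$ (with an analogous description for $\Htsc$). It thus suffices to show that for each such $\alpha^*$, the image $i(w_{\alpha^*})$ lies in $W(\Htsc,\tilde T^*)$. Transporting Proposition~\ref{prop:weyl-embedding} through the duality identifications of Weyl groups described in \S\ref{sec:conorm}, we obtain
\[
i(w_{\alpha^*}) \;=\; \prod_{\tilde\beta\in\Psi} w_{\tilde\beta^*},
\]
where $\alpha\in\Phi(G,T)$ is the root with $\delta(\alpha) = (\alpha^*)^\vee$, $\Psi\subseteq\Phi(\tilde G,\tilde T)$ is the $\Gamma$-orbit attached to $\alpha$ in \S\ref{sec:weyl-embedding}, and $\tilde\beta^*\in\Phi(\tilde G^*,\tilde T^*)$ is determined by $\tilde\delta(\tilde\beta) = (\tilde\beta^*)^\vee$. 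It therefore suffices to verify that $\tilde\beta^*(\tilde s) = 1$ for each $\tilde\beta\in\Psi$.

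The crux is the identity $\dnormchar[T^*](\tilde\beta^*) = m\cdot\alpha^*$ in $\bX^*(T^*)$ with $m = |\Gamma|/|\Gamma\cdot\tilde\alpha|\in\Z_{>0}$, for then $\tilde\beta^*(\tilde s) = (\dnormchar[T^*](\tilde\beta^*))(s) = \alpha^*(s)^m = 1$. To obtain this, transport the claim across the duality maps $\tilde\delta^*$ and $\delta^*$ to the cocharacter side: it becomes the equality $\normcochar(\tilde\beta^\vee) = m\cdot\alpha^\vee$ in $\bX_*(T)$. By definition, $\normcochar(\tilde\beta^\vee) = j_*\inv\bigl(\sum_{\gamma\in\Gamma}\gamma_*\tilde\beta^\vee\bigr)$; since $\Psi$ is a single $\Gamma$-orbit, the sum equals $(|\Gamma|/|\Psi|)\sum_{\theta\in\Psi}\theta^\vee$, which by Lemma~\ref{lem:multiple} equals $(|\Gamma|/|\Gamma\cdot\tilde\alpha|)\,\alpha^\vee$. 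This element is $\Gamma$-fixed and therefore lies in $j_*(\bX_*(T))$ by Condition~P1, so $j_*\inv$ returns $m\cdot\alpha^\vee$. The substantive input is Lemma~\ref{lem:multiple}; the main bookkeeping hurdle is to keep the various duality conventions straight throughout.
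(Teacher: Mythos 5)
Your proof is correct and follows the same overall route as the paper: the equivariance argument from Remark~\ref{rem:weyl-equivariant} for $W(H^*,T^*)$, the reduction via~\cite{carter:finite}*{Theorem 3.5.3} to checking $\beta^*(\tilde s)=1$ for the generating reflections of $W(\Hsc,T^*)$, and the computation of $\normcochar\beta^\vee$ via Lemma~\ref{lem:multiple}. The one genuine improvement is the final integrality step: the paper writes the coefficient as $|\Psi|\,|\stab_\Gamma\beta|/|\Gamma\cdot\tilde\alpha|$ and verifies its integrality by a case-by-case analysis (case~(1) vs.\ case~(2) of \S\ref{sec:weyl-embedding}), whereas you observe, using that $\Psi$ is a single $\Gamma$-orbit so $|\Psi|\,|\stab_\Gamma\beta|=|\Gamma|$, that the coefficient equals $|\Gamma|/|\Gamma\cdot\tilde\alpha|$, which is manifestly a positive integer by orbit--stabilizer. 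This eliminates the case analysis and the need to know that $|\stab_\Gamma\beta|$ is even in case~(2).
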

\begin{proof}
Suppose that $w\in W(H^*,T^*)$.  Then by Remark~\ref{rem:weyl-equivariant},
$$
(i(w))(\tilde s) = (i(w))(\dnorm[T^*](s)) = \dnorm[T^*](w(s)) = 
\dnorm[T^*](s) = \tilde s.
$$
Thus $i$ gives an embedding of $W(H^*,T^*)$ in $W(\tilde H^*,\tilde T^*)$.

Now suppose $w\in W(\Hsc,T^*)$.  According to~\cite{carter:finite}*{Theorem 3.5.3},
$w$ is a product of reflections through roots $\alpha^*\in\Phi(G^*,T^*)$
such that $\alpha^*(s) = 1$.  For such a root $\alpha^*$, let $\alpha$ be the corresponding
root in $\Phi(G,T)$:
$\alpha = \delta\inv ({\alpha^*}\spcheck)$.
Then 
$$
i(w_{\alpha^*}) = i(w_\alpha) = \prod_{\beta\in\Xi}w_\beta
$$
in the notation of Proposition~\ref{prop:weyl-embedding}.
For each $\beta\in\Xi$, 
let $\beta^*$ be the corresponding root in $\Phi(\tilde G^*,\tilde T^*)$:
$\beta^* = {\tilde\delta}^{*\,\inv} (\beta\spcheck)$.
Then $i(w_{\alpha^*}) = \prod_\Xi w_{\beta^*}$, and by \loccit, 
to show that $i(w)$ lies in $W(\Htsc,\tilde T^*)$, it suffices to show that 
$\beta^*(\tilde s) = 1$
for all $\beta\in\Xi$.
For such a root $\beta^*$, 
$$
\beta^*(\tilde s) = \beta^*(\dnorm[T^*](s)) = (\dnormchar[T^*]\beta^*)(s),
$$
so it is enough to show that $\dnormchar[T^*]\beta^*$ is an integer multiple of $\alpha^*$.

Recall the identifications of $V_*(T)$ with $V_*(\tilde{T})^\Gamma$ and
$V^*(T)$ with $V^*(\tilde{T})^\Gamma$ described at the beginning of \S\ref{sec:weyl-embedding}.
We have
\begin{align*}
\normcochar[T,]\beta^\vee &= \sum_{\gamma\in\Gamma}\gamma\cdot\beta^\vee
&& \text{(by (\ref{eq:cochar-norm}))}\\
& = |\stab_\Gamma\beta|\sum_{\beta'\in\Xi} (\beta')^\vee
&& \text{(by Remark~\ref{rem:psi})}\\
& = \frac{|\Xi||\stab_\Gamma\beta|}{|\Gamma\cdot \tilde\alpha|}\alpha^\vee
&& \text{(by Lemma~\ref{lem:multiple})}.
\end{align*}
But the constant $|\Xi||\stab_\Gamma\beta|/|\Gamma\cdot \tilde\alpha|$ is always
integral.  Indeed, (in the terminology of~\S\ref{sec:weyl-embedding}) in case (1), we have
$\Xi = \Gamma\cdot \tilde\alpha$, while in case (2), 
$|\Gamma\cdot \tilde\alpha| = 2|\Xi|$ and $|\stab_\Gamma\beta|$ is even.
Translating this to the dual setting, we have that $\dnormchar\beta^*$ is an integer multiple of
$\alpha^*$, and the proposition follows.
\end{proof}

\section{Lifting of semisimple geometric conjugacy classes}
\label{sec:geometric-general}
We now prove Statement \eqref{stmt:geom-conj-lift}
from the Introduction.
Let $\tilde{G}$, $G$, $\Gamma$, 
$\tilde{T}$, $T$,
and 
$(\phi,j_*)$
be as in \S\ref{sec:conorm}.

Given a maximal $k$-torus $S^*$ of $G^*$, choose corresponding maximal $k$-tori $\tilde S$,
$S^*$, and $\tilde S^*$, and and duality maps $\delta_S$ and $\delta_{\tilde S}$ as in Remark~\ref{rem:square}.
From Proposition \ref{prop:parascopy-equivalence}(\ref{item:all-tori-equally-good}),
there exists a parascopic datum
$(\phi',j_*')$
for $(\tilde{G},\Gamma,G)$ with respect to $\tilde{S}$ and $S$ that is equivalent to $(\phi,j_*)$.
Thus from~\S\ref{sec:conorm}, corresponding to these arbitrary, implicit choices, we have
a $W(G^*,S^*)$-equivariant
$k$-morphism $\map{\dnorm[S^*]}{S^*}{\tilde{S}^*}$.

\begin{prop}
\label{prop:lift-geometric-general}
There is a canonical
$k$-morphism $\dnorm$ from the $k$-variety of geometric semisimple conjugacy classes in $G^*$
to the analogous variety for $\tilde{G}^*$.  
Moreover, if $S^*$ is a maximal $k$-torus of $G^*$ and $s\in S^*(\bar k)$, then 
$$
\dnorm[S^*](s)\in\dnorm([s])(\bar k),
$$
where $[s]$ is the geometric conjugacy class of $s$ in $G^*$.
That is, $\dnorm$ is compatible with the conorms
$\dnorm[S^*]$ on all maximal $k$-tori in $G^*$.
\end{prop}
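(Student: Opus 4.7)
The plan is to construct $\dnorm$ locally on a chosen maximal $k$-torus using the conorm, then show the construction is canonical. Recall that for any quasisplit connected reductive $k$-group $H$ with maximal $k$-torus $S$, the $k$-variety of semisimple geometric conjugacy classes in $H$ is canonically the quotient $S/W(H,S)$. It thus suffices, for maximal $k$-tori $T^*\subseteq G^*$ and $\tilde{T}^*\subseteq \tilde{G}^*$ dually arising from the parascopic datum as in Remark \ref{rem:square}, to exhibit a canonical $k$-morphism $T^*/W(G^*,T^*)\to \tilde{T}^*/W(\tilde{G}^*,\tilde{T}^*)$ independent of the choices made.

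To produce the morphism, I would take the conorm $\dnorm[T^*]\colon T^*\to\tilde{T}^*$ constructed in \S\ref{sec:conorm}, a $k$-morphism which by Remark \ref{rem:weyl-equivariant} intertwines the action of $W(G^*,T^*)$ on $T^*$ with the action of its image $i(W(G^*,T^*))\subseteq W(\tilde{G}^*,\tilde{T}^*)$ on $\tilde{T}^*$. Hence $\dnorm[T^*]$ descends to a $k$-morphism on Weyl quotients, which I take to be $\dnorm$. The compatibility assertion $\dnorm[T^*](s)\in\dnorm([s])$ is then immediate for the chosen $T^*$.

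The hard part will be canonicity, i.e.\ independence from the auxiliary choices: the representative of $\tilde{T}^*$ in its stable class, the duality maps $\delta,\tilde\delta$, and the choice of $T^*$ itself. The first two alter $\dnorm[T^*]$ by composition with elements of $W(\tilde{G}^*,\tilde{T}^*)^{\Gal(k)}$ on the target and of $W(G^*,T^*)^{\Gal(k)}$ on the source (by Proposition \ref{prop:tori-duality} and Remark \ref{rem:tori-correspondence}), and both alterations vanish on passing to Weyl quotients. For the third, given any other maximal $k$-torus $T^{\prime *}\subseteq G^*$ corresponding under Proposition \ref{prop:tori-duality} to some $T'\subseteq G$, Proposition \ref{prop:parascopy-equivalence}(\ref{item:all-tori-equally-good}) supplies an equivalent parascopic datum with implicit torus $T'$ in $G$; unpacking Definition \ref{defn:equivalent-data}(\ref{item:j-compatible}) together with Remark \ref{rem:tori-correspondence} shows that $\dnorm[T^{\prime *}]$ and $\dnorm[T^*]$ are conjugate via inner automorphisms by elements of $G^*(k\sep)$ and $\tilde{G}^*(k\sep)$, and these descend to the identity on conjugacy-class varieties. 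Galois equivariance of the conorm, of the duality maps, and of the embedding $i$ then guarantees that the induced map on quotients is defined over $k$.
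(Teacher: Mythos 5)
Your proposal follows essentially the same route as the paper's proof: construct the map by descending the Weyl-equivariant conorm $\dnorm[T^*]$ to the Weyl quotients (which are the conjugacy-class varieties), then verify independence of the auxiliary choices by invoking Proposition \ref{prop:parascopy-equivalence}(\ref{item:all-tori-equally-good}), Definition \ref{defn:equivalent-data}(\ref{item:j-compatible}), and Remark \ref{rem:tori-correspondence} to show the conorms attached to two different tori are conjugate by inner automorphisms of $G^*(k\sep)$ and $\tilde G^*(k\sep)$. The paper organizes this last verification in a commutative cube and records the conclusion as equation \eqref{eq:torus-independent}, but the underlying argument is the same.
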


\begin{proof}
Let $S^*$ be a maximal $k$-torus in $G^*$.
As noted above, implicit in the construction of $\dnorm[S^*]$ are $k$-tori
$S$, $\tilde{S}$, and $\tilde{S}^*$, and duality maps $\delta_S$ and
$\delta_{\tilde S}$ (as in Remark \ref{rem:square}), as well as 
a parascopic datum $(\phi',j_*')$ for $(\tilde{G},\Gamma,G)$
with respect to $\tilde{S}$ and $S$ that is equivalent to $(\phi,j_*)$.
Since $\map{\dnorm[S^*]}{S^*}{\tilde{S}^*}$ is a $W(G^*,S^*)$-equivariant
$k$-morphism,
we obtain a $k$-morphism
$$
\map{\dnorm}{S^*/W(G^*,S^*)}{\tilde S^*/W(\tilde G^*,\tilde S^*)}.
$$
But these latter two varieties are $k$-isomorphic to the varieties
of geometric semisimple conjugacy classes in $G^*$ and $\tilde G^*$, respectively.

From the maximal $k$-tori $T\subseteq G$ and $\tilde{T}\subseteq \tilde{G}$,
Proposition \ref{prop:tori-duality}
gives us maximal $k$-tori
$T^*\subseteq G^*$,
and $\tilde{T}^*\subseteq \tilde{G}^*$, and duality maps $\delta_T$ and $\delta_{\tilde T}$.
We now show that $\dnorm$ is independent of the choice of the torus $S^*$
by showing that we would have obtained the same map had we chosen $S^* = T^*$
and $(\phi',j'_*) = (\phi,j_*)$.

Suppose $(\phi',j_*')$ is equivalent to $(\phi,j_*)$ via
$g\in G(k\sep)$ and $\tilde g\in\tilde G(k\sep)$. Then
$\lsup gT = S$, $\lsup {\tilde{g}} \tilde{T} = \tilde{S}$, 
$\phi'(\gamma) = \Int(\tilde{g})_* \circ \phi(\gamma) \circ \Int(\tilde{g})_*\inv$
for all $\gamma\in \Gamma$,
and
$j_*' = \Int(\tilde{g})_* \circ j_* \circ \Int(g)_* \inv$.
It follows from this and the
definition of $\normchar$
in \S\ref{sec:modules}
that
\begin{equation}
\label{eq:left-face}
\normchar[S] = \Int(\tilde g^{-1})^*\circ\normchar[T]\circ\Int (g)^*.
\end{equation}

Choose $g^*\in G^*(k\sep)$ and $\tilde g^*\in\tilde G^*(k\sep)$ such that $\lsup {g^*}T^* = S^*$
and $\lsup{\tilde g^*}\tilde T^* = \tilde S^*$ and such that
$g^*$ and $\tilde g^*$ are compatible with $g$ and $\tilde g$ (respectively)
as in Remark~\ref{rem:tori-correspondence}.  
Then there exist duality maps $\delta_T$ and $\delta_{\tilde T}$ such that
the top and bottom faces of the following diagram commute:
$$
\begin{xy}
\xymatrix{ 
& \bX^*(\tilde{S}) \ar[rr]^{\delta_{\tilde S}}\ar@{<-}'[d][dd]_(.3){\normchar[S]}
& & \bX_*(\tilde S^*) \ar@{<-}[dd]^{\dnormcochar[S^*,]}
\\ 
\bX^*(\tilde T) \ar[ur]^{\Int(\tilde g\inv)^*}\ar[rr]^(.65){\delta_{\tilde T}}\ar@{<-}[dd]_{\normchar[T]}
& & \bX_*(\tilde{T}^*)  \ar[ur]^{\Int(\tilde g^*)_*}\ar@{<-}[dd]^(.35){\dnormcochar[T^*,]}
\\ 
& \bX^*(S) \ar'[r]_(.6){\delta_S}[rr]
& & \bX_*(S^*)
\\ 
\bX^*(T)  \ar[rr]_{\delta_T}\ar[ur]^{\hspace{.7cm}\Int(g\inv)^*}
& & \bX_*(T^*)  \ar[ur]^{\Int(g^*)_*} 
} 
\end{xy}
$$
The front and back faces also commute by the definitions of 
$\dnormcochar[T^*,]$ and $\dnormcochar[S^*,]$.  
The left face commutes by (\ref{eq:left-face}).
Since all of the horizontal maps are isomorphisms, the right face must also commute.
That is, we have the equality
$$
\dnormcochar[S^*,] = \Int(\tilde g^*)_*\circ\dnormcochar[T^*,]\circ\Int(g^*)\inv_*
$$
of maps $\bX_*(S^*)\longrightarrow\bX_*(\tilde S^*)$.
Therefore, the corresponding homomorphisms $S^*\longrightarrow\tilde S^*$ must
be equal, i.e.,
\begin{equation}
\label{eq:torus-independent}
\dnorm[S^*] = \Int(\tilde g^*)\circ\dnorm[T^*]\circ\Int(g^*)\inv.
\end{equation}
It follows immediately that the definition of $\dnorm$ is
independent of the particular choice of torus $S^*$.
\end{proof}

\section{Main Theorem}
\label{sec:main}
Let $\tilde{G}$, $G$, $\Gamma$, 
$\tilde{T}$, $T$,
and 
$(\phi,j_*)$
be as in \S\ref{sec:conorm}
and \S\ref{sec:geometric-general}.

\begin{thm}
\label{thm:main}
Let $s_1,s_2\in G^*(k)$ be semisimple elements that are stably conjugate,
and let
$T_i^*$ be maximal $k$-tori in $G^*$ containing $s_i$.
Then the elements $\dnorm[T^*_i](s_i) \in \tilde{G}^*(k)$
are stably conjugate.
\end{thm}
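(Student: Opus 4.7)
We want to produce $\tilde{g} \in \tilde{G}^*(k\sep)$ satisfying $\lsup{\tilde{g}}\tilde{s}_1 = \tilde{s}_2$ and $\tilde{g}\inv \sigma(\tilde{g}) \in C_{\tilde{G}^*}(\tilde{s}_1)\conn$ for every $\sigma\in\Gal(k)$, where $\tilde{s}_i := \dnorm[T_i^*](s_i)$. Proposition~\ref{prop:lift-geometric-general} already gives that $\tilde{s}_1$ and $\tilde{s}_2$ are geometrically conjugate in $\tilde{G}^*$, so what remains is a cohomological refinement in the sense of Proposition~\ref{prop:stable-elements}.

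First I would arrange that the chosen conjugating element takes $T_1^*$ to $T_2^*$. The hypothesis gives $g \in G^*(k\sep)$ with $\lsup{g}s_1 = s_2$ and $g\inv \sigma(g) \in C_{G^*}(s_1)\conn$ for all $\sigma$. Both $\lsup{g}T_1^*$ and $T_2^*$ are maximal tori of the connected group $C_{G^*}(s_2)\conn$, so there is $c \in C_{G^*}(s_2)\conn(k\sep)$ with $\lsup{c}\lsup{g}T_1^* = T_2^*$; a direct calculation using $c\inv \sigma(c) \in C_{G^*}(s_2)\conn$ shows that $cg$ satisfies the same cocycle condition as $g$. We may therefore assume $\lsup{g}T_1^* = T_2^*$, in which case $g\inv \sigma(g) \in N_{G^*}(T_1^*) \cap C_{G^*}(s_1)\conn$, so its image $\overline{g\inv \sigma(g)}$ lies in $W(C_{G^*}(s_1)\conn, T_1^*)$.

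Next I would apply Proposition~\ref{prop:parascopy-equivalence}(a), transferred to the dual side via the duality results of \S\ref{sec:duality}, to produce $\tilde{g} \in \tilde{G}^*(k\sep)$ with $\lsup{\tilde{g}}\tilde{T}_1^* = \tilde{T}_2^*$ and
$$\overline{\tilde{g}\inv \sigma(\tilde{g})} = i\bigl(\overline{g\inv \sigma(g)}\bigr) \in W(\tilde{G}^*, \tilde{T}_1^*)^{\Gamma}$$
for all $\sigma$, where $i$ is the Weyl embedding of \S\ref{sec:weyl-embedding} transferred to the dual side as at the end of \S\ref{sec:conorm}. The analog of~\eqref{eq:torus-independent} then reads $\dnorm[T_2^*]\circ\Int(g) = \Int(\tilde{g})\circ\dnorm[T_1^*]$ on $T_1^*$, giving $\tilde{s}_2 = \lsup{\tilde{g}}\tilde{s}_1$. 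By Proposition~\ref{prop:centralizer-weyl}, $i$ carries $W(C_{G^*}(s_1)\conn, T_1^*)$ into $W(C_{\tilde{G}^*}(\tilde{s}_1)\conn, \tilde{T}_1^*)$, so the right-hand side of the display belongs to the latter group. Since $\tilde{T}_1^* \subseteq C_{\tilde{G}^*}(\tilde{s}_1)\conn$, the preimage of that Weyl group in $N_{\tilde{G}^*}(\tilde{T}_1^*)$ equals $N_{C_{\tilde{G}^*}(\tilde{s}_1)\conn}(\tilde{T}_1^*) \subseteq C_{\tilde{G}^*}(\tilde{s}_1)\conn$, so $\tilde{g}\inv \sigma(\tilde{g})$ has the required property.

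I expect the main obstacle to be the third paragraph: guaranteeing the existence of $\tilde{g}$ whose Weyl cocycle matches $i(\overline{g\inv\sigma(g)})$ on the nose for every $\sigma$, rather than merely up to a coboundary. This matching is precisely what the equivalence of parascopic data (Definition~\ref{defn:equivalent-data}) is designed to provide, and its availability rests on the $k$-quasisplit hypothesis on $\tilde{G}$ through the surjectivity part of Proposition~\ref{prop:stable-tori}.
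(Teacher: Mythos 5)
Your argument is correct and follows the paper's proof closely: both arrange for the conjugating element $g^*$ to carry $T_1^*$ to $T_2^*$, then produce $\tilde g^*$ whose Weyl cocycle in $Z^1(k,W(\tilde G^*,\tilde T_1^*))$ equals the image under $i$ of the cocycle of $g^*$, and finally invoke Proposition~\ref{prop:centralizer-weyl} to place $\tilde g^{*\,-1}\sigma(\tilde g^*)$ in $C_{\tilde G^*}(\tilde s_1)\conn$. The only cosmetic difference is that the paper obtains $\tilde g^*$ by citing the proof of Proposition~\ref{prop:lift-geometric-general}, while you unpack this into an explicit application of Proposition~\ref{prop:parascopy-equivalence}(a) transferred across the duality of \S\ref{sec:duality}; these are the same mechanism.
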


\begin{rem}
\label{rem:main}
That is,
there is a well-defined map $\dnormst$ from
the set of semisimple
stable conjugacy classes in $G^*(k)$
to
the set of such classes in $\tilde{G}^*(k)$,
such that for every semisimple $s \in G^*(k)$ and every
maximal $k$-torus $T^*$ in $G$ containing $s$,
we have that
$\dnorm[T^*](s) \in \dnormst([s]_{\text{st}})$,
where $[s]_{\text{st}}$ is the stable conjugacy class of $s$.
\end{rem}

\begin{proof}[Proof of Theorem \ref{thm:main}]
From $T_i^*$,
choose $T_i$, $\delta_i$, $\tilde T_i$,
$\tilde{T}_i^*$,
and
$\tilde\delta_i$
as in Remark \ref{rem:square}, and a parascopic datum $(\phi_i,j_{i,*})$
for $(\tilde G,\Gamma,G)$ with respect to $\tilde T_i$ and $T_i$ that is equivalent
to $(\phi,j_*)$. (These choices are implicit in the definition of $\dnorm[T_i^*]$.)
Let $\tilde s_i = \dnorm[T_i^*](s_i)\in\tilde G^*(k)$.
According to 
Proposition \ref{prop:lift-geometric-general},
$\tilde s_1$ is geometrically conjugate to $\tilde s_2$
in $\tilde G^*$.
We want to show that the stable conjugacy classes of
$\tilde s_1$ and $\tilde s_2$ in $\tilde G^*(k)$ coincide.

Let $H^*_1= C_{G^*}(s_1)$ and $\tilde H^*_1= C_{\tilde G^*}(\tilde s_1)$.
We have that $\lsup{g^*}s_1 = s_2$ for some $g^*\in G^*(k\sep)$ such that
${g^*}\inv\sigma(g^*)\in {H^*_1}\conn(k\sep)$ for all $\sigma\in\Gal(k)$.  Moreover,
by replacing $g^*$ by $g^*h^*$ for an appropriate element $h^*\in {H^*_1}\conn(k\sep)$,
we may assume, in addition, that $\lsup{g^*}T^*_1 = T^*_2$.
It follows that ${g^*}\inv\sigma(g^*)\in N_{{H^*_1}\conn}(T^*_1)(k\sep)$
for all $\sigma\in\Gal(k)$.

Choose $g\in G(k\sep)$ compatible with $g^*$ as in Remark~\ref{rem:tori-correspondence}
such that $\lsup g T_1 = T_2$.
By Proposition~\ref{prop:parascopy-equivalence}(\ref{item:all-tori-equally-good}) and Lemma~\ref{lem:equivalence}, there
exists $\tilde g\in\tilde G(k\sep)$ such that $\lsup{\tilde g}\tilde T_1 = \tilde T_2$ and
$i\bigl(\overline {g\inv\sigma(g)}\bigr) = \overline{\tilde{g}\inv\sigma(\tilde{g})}$.
Choose $\tilde g^*\in G(k\sep)$ compatible with $\tilde g$ as in Remark~\ref{rem:tori-correspondence}
such that $\lsup{\tilde g^*} \tilde T_1^* = \tilde T_2^*$.

Since $(\phi_1,j_{1,*})$ and $(\phi_2,j_{2,*})$ are equivalent to $(\phi ,j_*)$, they are
equivalent to each other.
By Lemma~\ref{lem:equivalence}, $g$ and $\tilde g$ implement an equivalence of $(\phi_1,j_{1,*})$ with
a parascopic datum for $(\tilde G,\Gamma,G)$ with respect to $\tilde T_2$ and $T_2$. The
latter datum must therefore be equivalent to $(\phi_2,j_{2,*})$, and hence
is related to $(\phi_2,j_{2,*})$ as in
Proposition~\ref{prop:parascopy-equivalence}(\ref{item:same-tori-equivalence}).
It follows that the conorms $\abmap{T_2^*}{\tilde T_2^*}$ corresponding to these data
differ by the action of an element of
$W(\tilde G^*,\tilde T_2^*)^{\Gal(k)}$. Thus the stable conjugacy classes of the images of $s_2$
under these conorms coincide. Hence we may assume that $(\phi_1,j_{1,*})$ and $(\phi_2,j_{2,*})$
are equivalent via the particular elements $g$ and $\tilde g$.

As in \eqref{eq:torus-independent},
we have
$\dnorm[T_2^*] = \Int(\tilde g^*)\circ\dnorm[T_1^*]\circ\Int(g^*)\inv$.
Thus
\begin{equation}
\label{eq:conorm}
\tilde s_2 = \dnorm[T_2^*](s_2) = \lsup{\tilde g^*}\bigl(\dnorm[T_1^*](\lsup {{g^*}\inv}s_2)\bigr)
 = \lsup{\tilde g^*}\bigl(\dnorm[T_1^*](s_1)\bigr) = \lsup{\tilde g^*}\tilde s_1.
 \end{equation}
It remains to show that $\tilde g^*{}\inv\sigma(\tilde g^*)\in \tilde H^*_1{}\conn(k\sep)$
for all $\sigma\in\Gal(k)$.
But $\tilde g^*$ was chosen so that the image of $\tilde g^*{}\inv\sigma(\tilde g^*)$ in $W(G^*,T^*_1)$ is
$i\bigl(\overline{g^*{}\inv\sigma(g^*)}\bigr)$.
Since $\overline{{g^*}\inv\sigma(g^*)}\in W({H^*_1}\conn,T^*_1)$, it follows that
$i\bigl(\overline{{g^*}\inv\sigma(g^*)}\bigr)$ lies in $W(\tilde H^*_1{}\conn,\tilde T_1^*)$
by Proposition~\ref{prop:centralizer-weyl}.
Therefore, $\tilde g^*{}\inv\sigma(\tilde g^*)\in N_{\tilde H^*_1{}\conn}(\tilde T^*_1)(k\sep)
\subseteq \tilde H^*_1{}\conn(k\sep)$, so $\tilde s_1$
and $\tilde s_2$ are stably conjugate.
\end{proof}

\begin{cor}
\label{cor:k-finite}
In the situation of Theorem \ref{thm:main} and Remark \ref{rem:main},
suppose that $k$ is perfect and has cohomologial dimension $\leq 1$.
Then $\dnorm$ refines to a map $\dnormst$ from the set of semisimple, $G^*(k)$-conjugacy classes in $G^*(k)$
to the set of semisimple, $\tilde{G}^*(k)$-conjugacy classes in $\tilde{G}^*(k)$.
\end{cor}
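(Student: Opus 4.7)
The plan is to observe that under the stated cohomological hypothesis on $k$, the notions of stable conjugacy and $G^*(k)$-conjugacy (respectively, $\tilde G^*(k)$-conjugacy) collapse into one, so the result follows immediately from combining Theorem~\ref{thm:main} with Corollary~\ref{cor:stable-elements}.

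More precisely, I would proceed as follows. First, I would apply Corollary~\ref{cor:stable-elements} twice: once to $G^*$ and once to $\tilde G^*$. Both are connected reductive $k$-groups, and since $k$ is perfect of cohomological dimension $\leq 1$, that corollary tells us that for semisimple elements, stable conjugacy coincides with rational conjugacy in each group. Thus the set of semisimple stable conjugacy classes in $G^*(k)$ is literally equal to the set of semisimple $G^*(k)$-conjugacy classes, and analogously for $\tilde G^*$.

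Second, I would invoke Theorem~\ref{thm:main} (as reformulated in Remark~\ref{rem:main}) to obtain the well-defined map $\dnormst$ on semisimple stable conjugacy classes. Under the identifications from the previous step, this is exactly a map from semisimple $G^*(k)$-conjugacy classes to semisimple $\tilde G^*(k)$-conjugacy classes, with the property that $\dnorm[T^*](s)$ lies in the image class of $s$ for any choice of maximal $k$-torus $T^*\subseteq G^*$ containing $s$. Finally, the compatibility of this refinement with $\dnorm$ follows from Proposition~\ref{prop:lift-geometric-general}, since $\dnormst$ is by construction a restriction/refinement of the geometric conjugacy class map.

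There is essentially no obstacle here: the corollary is a direct specialization of the main theorem, the only nontrivial ingredient being the vanishing of $H^1(k,L)$ for connected reductive $L$ (Lang--Steinberg), which has already been used in Corollary~\ref{cor:stable-elements}. The only thing to be mindful of is to invoke the cohomological dimension hypothesis for \emph{both} groups $G^*$ and $\tilde G^*$, which is immediate since the hypothesis is on $k$ alone.
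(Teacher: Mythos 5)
Your proposal is correct and follows exactly the same route as the paper, which simply cites Theorem~\ref{thm:main} together with Corollary~\ref{cor:stable-elements}. You have merely spelled out the details — applying Corollary~\ref{cor:stable-elements} to both $G^*$ and $\tilde G^*$ to collapse stable and rational conjugacy — which the paper leaves implicit.
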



\begin{proof}
From the Lang-Steinberg Theorem
(see \cite{serre:galois}*{\S III.2.3}),
$H^1(k,M)$ is trivial for every connected reductive $k$-group $M$.
Thus, from the first paragraph of \cite{kottwitz:rational-conj}*{\S3},
we see that semisimple stable conjugacy classes and semisimple rational conjugacy classes
coincide in the group of $k$-points of a connected reductive $k$-group,
so our result now follows from Theorem \ref{thm:main}.
\end{proof}

\appendix

\section{When must a quasi-semisimple automorphism preserve a Borel-torus pair defined over $k\sep$?}
\label{sec:separable}

We now offer a proof promised in Remark \ref{rem:automatically-separable}.

\begin{lem}
\label{lem:rational-torus}
Suppose $\tilde{G}$ is a connected reductive $k$-group,
and $\gamma$ is a quasi-semisimple $k$-automorphism of $\tilde G$.
Suppose that at least one of the following holds:
\begin{enumerate}[(i)]
\item
\label{item:char-not-two}
the characteristic of $k$ is not two;
\item
$k$ is perfect;
\item
\label{item:not-bad-case}
no power of $\gamma$ acts via a non-inner automorphism on any factor of $\tilde{G}$ of type $A_{2n}$.
\end{enumerate}
Then $\gamma$ preserves a Borel-torus pair defined over $k\sep$.
\end{lem}
\begin{proof}
Note that this statement is obvious when $k$ is perfect.

We may assume that $k=k\sep$ is separably closed.  It is enough to show that $\tilde G$
has a $\gamma$-stable maximal $k$-torus $\tilde T_\bullet$ contained in
a $\gamma$-stable Borel subgroup $\tilde B_\bullet$.  For then
$\tilde T_\bullet$ must be split over $k$, so the root groups corresponding to the
roots in $\Phi(\tilde G,\tilde T_\bullet)$ must
be defined over $k$, and hence so must $\tilde B_\bullet$.

Fix a Borel $\bar k$-subgroup $\tilde B_*$ of $\tilde G$ and
a maximal $\bar k$-torus $\tilde T_*\subseteq \tilde B_*$.  The homogeneous space
$\tilde G/\tilde T_*$ can be viewed, via the map
$g\tilde T_*\mapsto (\lsup g \tilde B_*,\lsup g \tilde T_*)$,
as the $\bar k$-variety $X$ of all pairs $(\tilde B,\tilde T)$,
where $\tilde B$ is a Borel $\bar k$-subgroup of $\tilde G$, and $\tilde T$ is a maximal
$\bar k$-torus of $\tilde B$.  Taking $\tilde T_*$ to be defined over $k$ shows that
$X$ can be given the structure of a $k$-variety; this structure is easily seen to be independent of
the particular choice of $(\tilde B_*,\tilde T_*)$.

Assume for now that $\tilde T_*$ is defined over $k$.
There is an obvious action of $\gamma$ on $X$.
Let $X^\gamma$ be the (nonempty) $\bar k$-variety  of
$\gamma$-fixed points in $X$.  A point $x\in X^\gamma(\bar k)$ corresponds to a
pair $(\tilde B,\tilde T)$ as above such that $\tilde B$ and $\tilde T$ are $\gamma$-stable.
Moreover, if $x\in X(k)\cap X^\gamma(\bar k)$ is represented by $g\in\tilde G(\bar k)$, then
$\Int(g): \tilde T_*\longrightarrow \tilde T := \lsup{g}\tilde T_*$ is a $k$-isomorphism.  Thus $\tilde T$ (and
hence $\tilde B$) is defined over $k$~\cite{borel:linear}*{Cor.~14.5}.
It follows that the desired Borel-torus pair exists provided that 
$X(k)\cap X^\gamma(\bar k)\neq\emptyset$.
To prove this nonemptiness, it suffices to show that $X^\gamma$ is defined
over $k$, for then $X^\gamma(k) = X(k)\cap X^\gamma(\bar k)$ must be dense in 
$X^\gamma$~\cite{borel:linear}*{Cor.~13.3}.
By~\cite{springer:lag}*{Thm.~11.2.13},
this will follow in turn if we can prove the equality of tangent spaces
\begin{equation}
\label{eq:tangent-space}
\Tan_x(X^\gamma) = \Tan_x(X)^\gamma\quad\mbox{for all $x\in X^\gamma(\bar k)$.}
\end{equation}

Let $\tilde T_\bullet$ and $\tilde B_\bullet$ be as in Definition~\ref{defn:qss}.
The Bruhat decomposition can be used to express $\tilde G$ as a disjoint union
\begin{equation}
\label{eq:bruhat}
\coprod_{w\in W(\tilde G,\tilde T_\bullet)}\tilde U\tilde U_w n_w\tilde T_\bullet  \,,
\end{equation}
where $n_w$ is a representative of $w$ in $N_{\tilde G}(\tilde T_\bullet)(\bar k)$, $\tilde U$ is the
unipotent radical of $\tilde B_\bullet$, and $\tilde U_w$ is the group generated by the root groups
corresponding to
$\alpha\in\Phi^-(\tilde G,\tilde T_\bullet)\cap w\Phi^+(\tilde G,\tilde T_\bullet)$, where
positivity is with respect to $\tilde B_\bullet$.

Let $z\in X(\bar k) = (\tilde G/\tilde T_\bullet)(\bar k)$ be the point corresponding to the
coset $\tilde T_\bullet$.
Since the obvious map
$\tilde U\times\tilde U_w\times\tilde T_\bullet\longrightarrow\tilde U\tilde U_w n_w\tilde T_\bullet$
is an isomorphism of varieties for each $w\in W(\tilde G,\tilde T_\bullet)$, 
it follows from (\ref{eq:bruhat}) that
\begin{equation}
\label{eq:fixed-coset}
(\tilde U\tilde U_w n_w\tilde T_\bullet\cdot z)^\gamma = 
\left\{
\begin{array}{ll}
\tilde U^\gamma\tilde U_w^\gamma n_w\cdot z & \mbox{if $w\in W(\tilde G,\tilde T_\bullet)^\gamma$,}\\
\emptyset & \mbox{otherwise.}
\end{array}
\right.
\end{equation}

From Lemma \ref{lem:steinberg},
$G = (\tilde G^\gamma)\conn$ is a reductive group,
$T_\bullet:= (\tilde T_\bullet^\gamma)\conn = G\cap\tilde T_\bullet$
is a maximal torus in $G$,
and $B_\bullet := (\tilde B_\bullet^\gamma)\conn$ is a Borel subgroup of $G$ containing $T_\bullet$.

Observe that $G$ acts on $X^\gamma$ via left translation.  It follows from (\ref{eq:fixed-coset})
that $X^\gamma$ is the union of a finite number of $G$-orbits represented by some
subset of $\{ n_w\}$.  (In fact, we show in 
Proposition~\ref{prop:weyl-embedding} that 
$W(G,T_\bullet)$ embeds in $W(\tilde G,\tilde T_\bullet)^\gamma$, implying that 
one can take as representatives for $G\backslash X^\gamma$ any 
subset of $\{ n_w\}$
corresponding to representatives of $W(G,T_\bullet)\backslash W(\tilde G,\tilde T_\bullet)^\gamma$ in
$W(\tilde G,\tilde T_\bullet)^\gamma$.)  Moreover, the stabilizer in $G$ of
$n_w\cdot z$ is precisely $T_\bullet$.  In particular, each $G$-orbit has dimension
$\dim G - \dim T_\bullet$.
Since these orbits are irreducible and of the same dimension as
$X^\gamma$, it is straightforward to show that they are precisely the irreducible components
of $X^\gamma$.  Since they are disjoint and finite in number,
they must also be the connected components of $X^\gamma$.
It therefore suffices to verify (\ref{eq:tangent-space}) for $x = n_w\cdot z$.

Since the dimension of any $G$-orbit in $X^\gamma$ is $\dim G - \dim T_\bullet$, we have
$\dim \Tan_x(X^\gamma) = \dim \Tan_x(G\cdot x)\geq \dim G - \dim T_\bullet$.
On the other hand, there is a natural identification
$\Tan_x(X) = L(\tilde G)/L(\tilde T_\bullet)$,
where $L$ denotes the Lie algebra functor,
so we have
$$
\Tan_x(X^\gamma)\subseteq \Tan_x(X)^\gamma = \left(L(\tilde G)/L(\tilde T_\bullet)\right)^\gamma
 = \Biggl(\bigoplus_{\alpha\in\Phi(\tilde G,\tilde T_\bullet)}L(\tilde U_\alpha)\Biggr)^\gamma.
$$
It follows from~\cite{steinberg:endomorphisms}*{\S8.2($2''''$)} that if
either assumption
(\ref{item:char-not-two})
or
(\ref{item:not-bad-case})
holds,
the last space is equal to
$$
\bigoplus_{\beta\in\Phi(G,T_\bullet)}L(U_\beta),
$$
which has dimension $|\Phi(G,T_\bullet)| = \dim G - \dim T_\bullet$.  Thus
$\Tan_x(X^\gamma) = \Tan_x(X)^\gamma$, as desired.
\end{proof}

\begin{bibdiv}
\begin{biblist}

\bib{adler-lansky:bc-u3-ram}{article}{
  author={Adler, Jeffrey D.},
  author={Lansky, Joshua M.},
  title={Depth-zero base change for ramified ${\rm U}(2,1)$},
  journal={Trans.\ Amer.\ Math.\ Soc.},
  volume={362},
  year={2010},
  pages={5569\ndash 5599},
  eprint={arXiv:0807.1528},
}

\bib{adler-lansky:lifting2}{article}{
  author={Adler, Jeffrey D.},
  author={Lansky, Joshua M.},
  title={Lifting representations of finite reductive groups II: Explicit conorm functions},
  eprint={arxiv:1109.0794},
}

\bib{adler-lansky:group-actions}{article}{
  author={Adler, Jeffrey D.},
  author={Lansky, Joshua M.},
  title={Quasi-semisimple actions of finite groups on $p$-adic groups and buildings},
  status={in preparation},
}

\bib{borel:linear}{book}{
  author={Borel, Armand},
  title={Linear algebraic groups},
  series={Graduate Texts in Mathematics},
  volume={126},
  publisher={Springer--Verlag},
  place={New York},
  date={1991},
  pages={xii+288},
  isbn={0-387-97370-2},
  review={\MR {1102012 (92d:20001)}},
}

\bib{carter:finite}{book}{
  author={Carter, Roger W.},
  title={Finite groups of Lie type},
  series={Wiley Classics Library},
  publisher={John Wiley \& Sons Ltd.},
  place={Chichester},
  date={1993},
  pages={xii+544},
  isbn={0-471-94109-3},
  review={\MR {1266626 (94k:20020)}},
}

\bib{deligne-lusztig:finite}{article}{
  author={Deligne, Pierre},
  author={Lusztig, George},
  title={Representations of reductive groups over finite fields},
  journal={Ann. of Math. (2)},
  volume={103},
  date={1976},
  number={1},
  pages={103\ndash 161},
  issn={0003-486X},
  review={\MR {0393266 (52 \#14076)}},
}

\bib{digne-michel:reps-book}{book}{
  author={Digne, Fran{\c {c}}ois},
  author={Michel, Jean},
  title={Representations of finite groups of Lie type},
  series={London Mathematical Society Student Texts},
  volume={21},
  publisher={Cambridge University Press},
  place={Cambridge},
  date={1991},
  pages={iv+159},
  isbn={0-521-40117-8},
  isbn={0-521-40648-X},
  review={\MR {1118841 (92g:20063)}},
}

\bib{howard:thesis}{article}{
  author={Howard, Tatiana K.},
  title={Lifting of characters on $p$-adic orthogonal and metaplectic groups},
  journal={Compos. Math.},
  volume={146},
  date={2010},
  number={3},
  pages={795--810},
  issn={0010-437X},
  review={\MR {2644935 (2011c:22029)}},
  doi={10.1112/S0010437X09004618},
}

\bib{kottwitz:rational-conj}{article}{
  author={Kottwitz, Robert E.},
  title={Rational conjugacy classes in reductive groups},
  journal={Duke Math. J.},
  volume={49},
  date={1982},
  number={4},
  pages={785--806},
  issn={0012-7094},
  review={\MR {0683003 (84k:20020)}},
}

\bib{kottwitz-shelstad:twisted-endoscopy}{article}{
  author={Kottwitz, Robert E.},
  author={Shelstad, Diana},
  title={Foundations of twisted endoscopy},
  language={English, with English and French summaries},
  journal={Ast\'erisque},
  number={255},
  date={1999},
  pages={vi+190},
  issn={0303-1179},
  review={\MR {1687096 (2000k:22024)}},
}

\bib{lemaire:twisted-characters}{article}{
  author={Lemaire, Bertrand},
  title={Caract\`eres tordus des repr\'{e}sentations admissibles},
  eprint={arxiv:1007.3576v2},
}

\bib{prasad-yu:actions}{article}{
  author={Prasad, Gopal},
  author={Yu, Jiu-Kang},
  title={On finite group actions on reductive groups and buildings},
  journal={Invent. Math.},
  volume={147},
  date={2002},
  number={3},
  pages={545\ndash 560},
  issn={0020-9910},
  review={\MR {1893005 (2003e:20036)}},
}

\bib{raghunathan:tori}{article}{
  author={Raghunathan, M.~S.},
  title={Tori in quasi-split groups},
  journal={J.~Ramanujan Math.~Soc.},
  volume={19},
  date={2004},
  number={4},
  pages={281\ndash 287},
  issn={0970-1249},
  review={\MR {MR2125504 (2005m:20114)}},
}

\bib{reeder:elliptic-centralizers}{article}{
  author={Reeder, Mark},
  title={Elliptic centralizers in Weyl groups and their coinvariant representations},
  journal={Represent. Theory},
  volume={15},
  date={2011},
  pages={63--111},
  issn={1088-4165},
  review={\MR {2765477}},
  doi={10.1090/S1088-4165-2011-00377-0},
}

\bib{serre:galois}{book}{
  author={Serre, Jean-Pierre},
  title={Galois cohomology},
  series={Springer Monographs in Mathematics},
  publisher={Springer--Verlag},
  place={Berlin},
  date={2002},
  pages={x+210},
  isbn={3-540-42192-0},
  review={\MR {1867431 (2002i:12004)}},
  language={English},
}

\bib{springer:lag}{book}{
  author={Springer, Tonny A.},
  title={Linear algebraic groups},
  series={Progress in Mathematics},
  volume={9},
  publisher={Birkh\"auser Boston Inc.},
  place={Boston, MA},
  date={1998},
  pages={xiv+334},
  isbn={0-8176-4021-5},
  review={\MR {1642713 (99h:20075)}},
}

\bib{steinberg:endomorphisms}{book}{
  author={Steinberg, Robert},
  title={Endomorphisms of linear algebraic groups},
  series={Memoirs of the American Mathematical Society, No. 80},
  publisher={American Mathematical Society},
  place={Providence, R.I.},
  date={1968},
  pages={108},
  review={\MR {0230728 (37 \#6288)}},
}

\bib{weibel:homological-algebra}{book}{
  author={Weibel, Charles A.},
  title={An introduction to homological algebra},
  series={Cambridge Studies in Advanced Mathematics},
  volume={38},
  publisher={Cambridge University Press, Cambridge},
  date={1994},
  pages={xiv+450},
  isbn={0-521-43500-5},
  isbn={0-521-55987-1},
  review={\MR {1269324 (95f:18001)}},
}

\end{biblist}
\end{bibdiv}
\end{document}